\definecolor{chianti}{rgb}{0.6,0,0}
\definecolor{meretale}{rgb}{0,0,.6}
\definecolor{leaf}{rgb}{0,.35,0}
\DeclareFontFamily{OMS}{rsfs}{\skewchar\font'60}
\DeclareFontShape{OMS}{rsfs}{m}{n}{<-5>rsfs5 <5-7>rsfs7 <7->rsfs10 }{}
\DeclareSymbolFont{rsfs}{OMS}{rsfs}{m}{n}
\DeclareSymbolFontAlphabet{\scr}{rsfs}
\numberwithin{equation}{section}
\newtheorem{theorem}{Theorem}[section]
\newtheorem{lemma}[theorem]{Lemma}
\newtheorem{prop}[theorem]{Proposition}
\newtheorem{corollary}[theorem]{Corollary}
\theoremstyle{definition}
\newtheorem{defn}[theorem]{Definition}
\newtheorem{example}[theorem]{Example}
\theoremstyle{remark}
\newtheorem{remark}[theorem]{Remark}
\newtheorem{notation}[theorem]{Notation}
\numberwithin{equation}{subsection}
\newcommand{\fraka}{\mathfrak{a}}
\newcommand{\frakb}{\mathfrak{b}}
\newcommand{\frakm}{\mathfrak{m}}
\newcommand{\frakp}{\mathfrak{p}}
\newcommand{\NN}{\mathbb{N}}
\newcommand{\PP}{\mathbb{P}}
\newcommand{\ZZ}{\mathbb{Z}}
\newcommand{\calD}{\mathcal{D}}
\newcommand{\kk}{\Bbbk}
\newcommand{\Mod}{\operatorname{Mod}^*}
\newcommand{\hSpec}{\operatorname{Spec}^*}
\DeclareMathOperator{\Ext}{Ext}
\DeclareMathOperator{\Supp}{Supp}
\DeclareMathOperator{\Proj}{Proj}
\DeclareMathOperator{\Depth}{depth}
\DeclareMathOperator{\hgt}{height}
\DeclareMathOperator{\cd}{cd}
\DeclareMathOperator{\E}{E}
\DeclareMathOperator{\id}{id}
\DeclareMathOperator{\Char}{char}
\title{Local cohomology and Segr\'{e} products}
\author{Jiamin Li and Wenliang Zhang}
\address{Department of Mathematics, Statistics, and Computer Science, University of Illinois at Chicago,
Chicago, IL 60607}
\email{jli283@uic.edu, wlzhang@uic.edu}
\thanks{Both authors are partially supported by NSF through DMS-1752081.}
\subjclass[2020]{13D45, 14B15, 13A02}
\begin{document}

\maketitle

\begin{abstract}
We prove a K\"{u}nneth formula for local cohomology of a Segr\'{e} product of graded modules supported in a Segr\'{e} product of ideals. In order to apply our formula to the study of cohomological dimension, we also investigate asymptotic behaviors of Eulerian graded $\scr{D}$-modules.
\end{abstract}

\dedicatory{
\begin{center}\emph{Dedicated to Professor Kei-ichi Watanabe on the occasion of his 80th birthday.}\end{center}
}

\section{Introduction}
Let $\kk$ be a field and let $R=\kk[x_0,\dots,x_n]$ and $S=\kk[y_0,\dots,y_m]$ be polynomial rings over $\kk$. Then the homogeneous coordinate ring of the Segr\'{e} embedding $\PP^n\times \PP^m\hookrightarrow \PP^{mn+m+n}$ is given by the Segr\'{e} product of $R$ and $S$:
\[R\#S:=\bigoplus_{i\geq 0}R_i\otimes_{\kk}S_i.\]
More generally, given any two $\NN$-graded rings $R$ and $S$ such that $R_0=S_0=\kk$, the Segr\'{e} product $R\#S$ of $R$ and $S$ is defined as above. Segr\'{e} products of $\NN$-graded rings were considered in \cite{ChowUnmixedness} to investigate whether the Segr\'{e} product of Cohen-Macaulay $\NN$-graded rings is again Cohen-Macaulay. This approach was later extended in \cite[Theorem 4.1.5]{GotoWatanabeI} to a K\"{u}nneth type formula. 

\begin{theorem}[Goto-Watanabe]
\label{thm: GW}
Let $R,S$ be Noetherian $\NN$-graded rings with $R_0=S_0=\kk$ and let $M$ (and $N$) be a $\ZZ$-graded $R$-module (a $\ZZ$-graded $S$-module, respectively). Denote by $\frakm_R$, $\frakm_S$ and $\frakm_{R\#S}$ the homogeneous maximal ideals in $R$, $S$, and $R\#S$ respectively. Assume that $H^i_{\frakm_R}(M)=H^i_{\frakm_R}(N)=0$ when $i=0,1$. Then, for each $k\geq 2$,
\[
H^k_{\frakm_{R\#S}}(M\#N)\cong \Big( M\#H^k_{\frakm_S}(N)\Big)\oplus \Big(H^k_{\frakm_R}(M)\#N\Big)\oplus \Big(\bigoplus_{i+j=k+1}H^i_{\frakm_R}(M)\#H^j_{\frakm_S}(N)\Big)
\]
\end{theorem}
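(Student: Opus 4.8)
The plan is to translate everything into sheaf cohomology on a product of projective schemes and then apply the K\"unneth formula there. For a Noetherian $\NN$-graded ring $A$ with $A_0=\kk$, write $\frakm_A$ for its homogeneous maximal ideal and $X_A:=\Proj A$. For every $\ZZ$-graded $A$-module $L$ there is the standard exact sequence
\[
0\longrightarrow H^0_{\frakm_A}(L)\longrightarrow L\longrightarrow \bigoplus_{t\in\ZZ}H^0\!\big(X_A,\widetilde{L(t)}\big)\longrightarrow H^1_{\frakm_A}(L)\longrightarrow 0,
\]
together with isomorphisms $H^k_{\frakm_A}(L)\cong\bigoplus_{t\in\ZZ}H^{k-1}\!\big(X_A,\widetilde{L(t)}\big)$ for all $k\geq 2$. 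First I would feed the hypothesis $H^0_{\frakm_R}(M)=H^1_{\frakm_R}(M)=0$ into this sequence for $A=R$, which makes the middle map an isomorphism and gives $M\cong\bigoplus_t H^0(X_R,\widetilde{M(t)})$; symmetrically $N\cong\bigoplus_t H^0(X_S,\widetilde{N(t)})$. In degrees $p\geq 1$ I retain the dictionary $H^p(X_R,\widetilde{M(t)})\cong\big(H^{p+1}_{\frakm_R}(M)\big)_t$, and likewise for $S$.

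The geometric input I would use is the Segr\'e embedding: $\Proj(R\#S)\cong X_R\times_{\kk}X_S$, compatibly with twists and with $\widetilde{(M\#N)(t)}\cong\widetilde{M(t)}\boxtimes_{\kk}\widetilde{N(t)}$ for every $t$ (for possibly non-standard-graded $R,S$ this is obtained by passing to a common Veronese subring generated in degree one, using $(R\#S)^{(d)}=R^{(d)}\#S^{(d)}$). Granting this, applying the displayed sequence to $A=R\#S$ gives, for $k\geq 2$,
\[
H^k_{\frakm_{R\#S}}(M\#N)\cong\bigoplus_{t\in\ZZ}H^{k-1}\!\big(X_R\times_{\kk}X_S,\ \widetilde{M(t)}\boxtimes_{\kk}\widetilde{N(t)}\big),
\]
and since $\kk$ is a field the K\"unneth formula for sheaf cohomology of an external tensor product on a product of projective schemes over $\kk$ carries no $\Tor$ correction and yields
\[
H^{k-1}\!\big(X_R\times_{\kk}X_S,\ \widetilde{M(t)}\boxtimes_{\kk}\widetilde{N(t)}\big)\cong\bigoplus_{p+q=k-1}H^p\!\big(X_R,\widetilde{M(t)}\big)\otimes_{\kk}H^q\!\big(X_S,\widetilde{N(t)}\big).
\]

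To finish I would substitute, sum over $t$, and organize the double sum by $(p,q)$. The $p=0$ part becomes $\bigoplus_t H^0(X_R,\widetilde{M(t)})\otimes_\kk H^{k-1}(X_S,\widetilde{N(t)})\cong M\#H^k_{\frakm_S}(N)$, the identification $H^{k-1}(X_S,\widetilde{N(t)})\cong(H^k_{\frakm_S}(N))_t$ being available because $k-1\geq 1$; symmetrically the $q=0$ part is $H^k_{\frakm_R}(M)\#N$; and the terms with $p,q\geq 1$ assemble, after the reindexing $i=p+1$, $j=q+1$, into $\bigoplus_{i+j=k+1,\,i,j\geq 2}H^i_{\frakm_R}(M)\#H^j_{\frakm_S}(N)$. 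Finally, because $H^0_{\frakm_R}(M)=H^1_{\frakm_R}(M)=H^0_{\frakm_S}(N)=H^1_{\frakm_S}(N)=0$, the summands with $i\leq 1$ or $j\leq 1$ vanish, so the index range may be enlarged to all $i+j=k+1$, producing exactly the claimed decomposition. I expect the one genuinely delicate point to be the second step: pinning down $\Proj(R\#S)\cong X_R\times_\kk X_S$, checking that the Segr\'e product of modules matches the external tensor product of their sheafifications in all the twists $t$ (especially when $R$ and $S$ are not standard graded), and extracting the K\"unneth splitting in this generality; once those are in hand, the first and third steps are routine.
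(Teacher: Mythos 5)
Your argument is sound but takes a genuinely different route from the paper. The paper (following Goto--Watanabe's original method) works homologically: take injective resolutions $E^{\bullet}$ of $M$ and $F^{\bullet}$ of $N$, show that $E^{\bullet}\#F^{\bullet}$ is a $\Gamma_{\frakm_{R\#S}}$-acyclic resolution of $M\#N$ (Theorem~\ref{Segre product produce a Gamma resolution}), split each injective term into its torsion and torsion-free parts, and read off the cohomology of the resulting quotient complex via a K\"unneth lemma for Segr\'e products of complexes (Lemma~\ref{cohomology of Segre prod of complex}); the long exact sequence attached to the short exact sequence of complexes then yields both the four-term sequence and the isomorphisms. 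You instead pass to $\Proj$ via the Serre--Grothendieck correspondence, identify $\Proj(R\#S)$ with $X_R\times_\kk X_S$ through the Segr\'e embedding, and invoke the geometric K\"unneth formula over a field. Your bookkeeping in the final step is correct: the Serre--Grothendieck degree shift together with the reindexing $i=p+1$, $j=q+1$ reproduces the three summands once the vanishing hypotheses let you replace saturations by $M$ and $N$ themselves and enlarge the index range. You have also correctly located the delicate point: the isomorphism $\Proj(R\#S)\cong X_R\times_{\kk}X_S$ with the accompanying sheaf compatibility $\widetilde{(M\#N)(t)}\cong\widetilde{M(t)}\boxtimes_{\kk}\widetilde{N(t)}$ for every $t$ is where the real work sits, and extra care is needed outside the standard-graded setting (your Veronese reduction $(R\#S)^{(d)}=R^{(d)}\#S^{(d)}$ is the right device, but one must also check that the Serre--Grothendieck comparison and the external-tensor identification transport across the $d$-th Veronese). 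Each approach buys something: the paper's homological argument extends with essentially no change to arbitrary homogeneous ideals $I\subseteq R$ and $J\subseteq S$ (this is Theorem~\ref{Kunneth formula: two components}), while yours is shorter and more conceptual once one is willing to outsource the splitting to the geometric K\"unneth formula on a product of projective schemes.
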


One of our main results in this article extends this K\"{u}nneth type formula to more general ideals as follows.
\begin{theorem}[=Theorem \ref{Kunneth formula: two components}]
\label{Kunneth formula: intro}
Let $\kk$ be a field. Let $R$ and $S$ be standard graded $\kk$-algebras. Let $I$ be a homogeneous ideal of $R$ and $J$ be a homogeneous ideal of $S$. Then for every $\ZZ$-graded $R$-module $M$ and every $\ZZ$-graded $S$-module $N$ we have an exact sequence
\[
0\to H^0_{I\#J}(M\#N)\to M\#N\to M^{sat}_I\#N^{sat}_J\to H^1_{I\#J}(M\#N)\to 0
\]
and isomorphisms
\[
H^k_{I\#J}(M\#N)\cong \Big( M^{sat}_I\#H^k_J(N)\Big)\oplus \Big(H^k_I(M)\#N^{sat}_J\Big)\oplus \Big(\bigoplus_{i+j=k+1}H^i_I(M)\#H^j_J(N)\Big)
\]
for all $k\geq 2$. 
\end{theorem}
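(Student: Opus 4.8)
The strategy is to move the computation off $R\#S$ and onto the ordinary tensor product $R\otimes_\kk S$ — where everything is flat over $\kk$ and the classical Künneth formula for local cohomology is available — and then to descend. Give $R\otimes_\kk S$ the $\ZZ$-grading in which $x\otimes y$ has degree $\deg x-\deg y$; then $R\#S$ is its degree-zero part, $M\#N$ is the degree-zero component of the $\ZZ$-graded $(R\otimes_\kk S)$-module $M\otimes_\kk N$, and $(I\#J)(R\otimes_\kk S)$ is a graded ideal with degree-zero part $I\#J$. The first step is a descent lemma: if $P$ is a graded $R\#S$-module occurring as the degree-zero component of a graded $(R\otimes_\kk S)$-module $Q$, then $H^i_{\fraka}(P)$ is the degree-zero component of $H^i_{\fraka(R\otimes_\kk S)}(Q)$, for every homogeneous ideal $\fraka$ of $R\#S$ and all $i$. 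This follows by comparing \v{C}ech complexes: a localization of $Q$ at a degree-zero element has degree-zero component the corresponding localization of $P$, and the \v{C}ech differentials preserve degree, so the \v{C}ech complex over $R\#S$ computing $H^\bullet_{\fraka}(P)$ is literally the degree-zero component of the \v{C}ech complex over $R\otimes_\kk S$ computing $H^\bullet_{\fraka(R\otimes_\kk S)}(Q)$. Applied with $\fraka=I\#J$, this reduces the problem to computing $H^\bullet_{\frakC}(M\otimes_\kk N)$ for $\frakC:=(I\#J)(R\otimes_\kk S)$.

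The second step identifies $\frakC$ up to radical. Choosing homogeneous generators $f_1,\dots,f_r$ of $I$ and $g_1,\dots,g_s$ of $J$, and using that $R,S$ are standard graded and $I,J$ are proper, one checks that $I\#S$ is generated over $R\#S$ by the elements $f_t\otimes h$ with $h$ running over a basis of $S_{\deg f_t}$, and dually for $R\#J$; comparing vanishing loci then yields $\sqrt{\frakC}=\sqrt{(I\otimes_\kk S)\cap(R\otimes_\kk J)}$. Now apply the Mayer--Vietoris triangle over $R\otimes_\kk S$ to the pair $\fraka=I\otimes_\kk S$, $\frakb=R\otimes_\kk J$, whose sum is $(I\otimes_\kk S)+(R\otimes_\kk J)$ and whose intersection has radical $\sqrt{\frakC}$, together with the following identities, all valid because $\kk$ is a field: $R\Gamma_{I\otimes_\kk S}(M\otimes_\kk N)\simeq R\Gamma_I(M)\otimes_\kk N$ and $R\Gamma_{R\otimes_\kk J}(M\otimes_\kk N)\simeq M\otimes_\kk R\Gamma_J(N)$ (local cohomology of an extended ideal, plus the fact that $M\otimes_\kk N$ is a direct sum of copies of $M$ over $R$), and the classical Künneth formula $R\Gamma_{(I\otimes_\kk S)+(R\otimes_\kk J)}(M\otimes_\kk N)\simeq R\Gamma_I(M)\otimes_\kk R\Gamma_J(N)$ (the \v{C}ech complex on $\{f_t\otimes1\}\cup\{1\otimes g_u\}$ is the tensor product over $\kk$ of the two \v{C}ech complexes, and there are no $\Tor$ corrections over a field).

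Feeding these into the Mayer--Vietoris triangle and passing to degree-zero parts (exact, and compatible with cones, by the descent lemma) yields, in the derived category of graded $R\#S$-modules,
\[
R\Gamma_{I\#J}(M\#N)\ \simeq\ \mathrm{Cone}\!\Big(R\Gamma_I(M)\#R\Gamma_J(N)\ \xrightarrow{\ (\mathrm{id}\#\eta,\,-\epsilon\#\mathrm{id})\ }\ \big(R\Gamma_I(M)\#N\big)\oplus\big(M\#R\Gamma_J(N)\big)\Big),
\]
where $\epsilon\colon R\Gamma_I(M)\to M$, $\eta\colon R\Gamma_J(N)\to N$ are the canonical maps and $\#$ of complexes means the total complex of the bicomplex. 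The final step is to read off the cohomology of this cone. Write $\check C^\bullet_I(M)$ for the deleted \v{C}ech complex of $M$ along $I$, so that $\check C^\bullet_I(M)\simeq\mathrm{Cone}(R\Gamma_I(M)\to M)$, $H^0(\check C^\bullet_I(M))=M^{sat}_I$, $H^p(\check C^\bullet_I(M))=H^{p+1}_I(M)$ for $p\ge1$, and $\check C^\bullet_I(M)$ has no cohomology in negative degrees. Then the cone above sits in an exact triangle with $M\#N$ and $\check C^\bullet_I(M)\#\check C^\bullet_J(N)$, and, since $\#$ is a ``diagonal'' tensor over the field $\kk$, one has a Künneth isomorphism $H^n\big(\check C^\bullet_I(M)\#\check C^\bullet_J(N)\big)\cong\bigoplus_{p+q=n}H^p(\check C^\bullet_I M)\#H^q(\check C^\bullet_J N)$ with no correction terms. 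Expanding this — the $p=0$ and $q=0$ pieces producing the summands involving $M^{sat}_I$ and $N^{sat}_J$, the pieces with $p,q\ge1$ reindexing (via $i=p+1$, $j=q+1$) to the terms $H^i_I(M)\#H^j_J(N)$ with $i+j=k+1$ — and running the long exact sequence of the triangle, in which the $M\#N$-term is out of range for $k\ge2$ but contributes exactly the middle of the four-term sequence for $k\le1$ (using $H^0(\check C^\bullet_I(M)\#\check C^\bullet_J(N))=M^{sat}_I\#N^{sat}_J$), gives precisely the asserted isomorphisms and exact sequence.

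The step I expect to be the real obstacle is this last one: tracking the connecting maps of the Mayer--Vietoris and cone triangles carefully enough to see that the resulting long exact sequences degenerate into the clean direct sums claimed rather than leaving extension problems — it is exactly here that working over a field, so that every complex in sight is formal, is indispensable — and matching the indexing conventions so that the ``boundary'' contributions ($p=0$, $q=0$, and the $M\#N$-term) are correctly identified with the transforms $M^{sat}_I$, $N^{sat}_J$ and with the low-degree four-term sequence, rather than with the torsion submodules $H^0_I$, $H^0_J$ separately. A secondary, more routine, point requiring care is the proof of the descent lemma at the level of complexes (not merely for the functors $\Gamma$), including the identification of $\mathrm{Cone}(R\Gamma_I(M)\to M)$ with the deleted \v{C}ech complex.
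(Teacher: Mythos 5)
Your argument is correct in structure (and I believe fillable), but it takes a genuinely different route from the paper's. The paper never leaves the category $\Mod_{R\#S}$: it shows directly that for injective resolutions $E^\bullet$, $F^\bullet$ of $M$, $N$, the Segr\'e-product complex $E^\bullet\#F^\bullet$ is $\Gamma_{I\#J}$-acyclic (Theorem~\ref{Segre product produce a Gamma resolution}), then decomposes it via the short exact sequence of complexes
$0\to \Gamma_{I\#J}(E^\bullet\#F^\bullet)\to E^\bullet\#F^\bullet\to (E^\bullet/\Gamma_I E^\bullet)\#(F^\bullet/\Gamma_J F^\bullet)\to 0$,
and applies the Segr\'e--K\"unneth lemma for complexes (Lemma~\ref{cohomology of Segre prod of complex}) to the quotient. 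You instead pass to $R\otimes_\kk S$ with the ``difference'' grading $\deg(r\otimes s)=\deg r-\deg s$, so that degree zero recovers $R\#S$ and $M\#N$; there you invoke Mayer--Vietoris for the pair $I\otimes_\kk S$, $R\otimes_\kk J$ (whose intersection has the same radical as $(I\#J)(R\otimes_\kk S)$) together with the \v{C}ech-level K\"unneth formula over the field $\kk$, and then descend via the exact degree-zero-part functor. Both proofs ultimately land on the same distinguished triangle
$R\Gamma_{I\#J}(M\#N)\to M\#N\to \check{C}_I^\bullet(M)\#\check{C}_J^\bullet(N)\to [1]$,
your deleted \v{C}ech complexes being quasi-isomorphic to the paper's quotient complexes $E^\bullet/\Gamma_I E^\bullet$ (both represent $\bigoplus_\ell R\Gamma(U,\widetilde{M}(\ell))$), so the final unwinding and the resulting four-term sequence and isomorphisms agree. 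What you gain is a conceptual reinterpretation: Segr\'e-product local cohomology as a degree-zero slice of ordinary local cohomology over $R\otimes_\kk S$, which makes the role of $\kk$ being a field, and the absence of $\Tor$ corrections, completely explicit. What you pay is the need to establish the descent lemma at the level of complexes (compatibly with cones and Mayer--Vietoris), the radical identity $\sqrt{(I\#J)(R\otimes_\kk S)}=\sqrt{(I\otimes_\kk S)\cap(R\otimes_\kk J)}$, and the sign/shift bookkeeping in the iterated-cone step that converts the Mayer--Vietoris cone into the triangle above; you correctly flag this last step as the delicate one, and while it does work out (the total complex of the $2\times 2$ square built from $\epsilon\colon R\Gamma_I(M)\to M$ and $\eta\colon R\Gamma_J(N)\to N$ is $\bigl(\check{C}_I^\bullet(M)\#\check{C}_J^\bullet(N)\bigr)[-2]$, and stupid truncation produces the desired triangle), the paper's single short exact sequence of complexes reaches the same point more directly.

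One small caveat worth recording on both sides: the last direct summand $\bigoplus_{i+j=k+1}H^i_I(M)\#H^j_J(N)$ in the stated isomorphism should be understood as ranging over $i,j\ge 2$; that is exactly what both your computation and the paper's yield (the boundary indices $p=0$ and $q=0$ are already accounted for by the $M^{sat}_I$ and $N^{sat}_J$ summands), and in Goto--Watanabe's original setting the distinction is invisible because $H^0$ and $H^1$ are assumed to vanish.
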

In Theorem \ref{Kunneth formula: intro}, $M^{sat}_I$ (and $N^{sat}_J$) denotes the $I$-saturation of $M$ (and the $J$-saturation of $N$) which is defined in Definition \ref{defn: sat}.

Note that $\frakm_{R\#S}=\frakm_R\#\frakm_S$ in the settings of Theorem \ref{thm: GW}. Hence Theorem \ref{Kunneth formula: intro} can be viewed as a generalization of Theorem \ref{thm: GW}.

One of our motivations behind Theorem \ref{Kunneth formula: intro} is the study of cohomological dimension ({\it cf.} Definition \ref{cd defn}) of ideals in a non-regular ring. To this end, we also investigate Eulerian graded $\scr{D}$-modules; local cohomology modules of a polynomial ring supported in a homogeneous ideal are primary examples of Eulerian graded $\scr{D}$-modules ({\it cf.} \S\ref{graded D-module and cd section} for details). One of our main results on Eulerian graded $\scr{D}$-modules is the following:

\begin{theorem}[=Theorem \ref{nonzero in negative degree}]
\label{intro thm: nonzero in negative degree}
Let $R=\kk[x_1,\dots,x_d]$ be a polynomial ring over a field $\kk$  and let $M$ be a nonzero Eulerian graded $\scr{D}$-module. 
\begin{enumerate}
\item 
\label{supp dim 0}
If $\dim(\Supp_R(M))=0$, then 
\begin{enumerate}
\item $M_\ell=0$ for each integer $\ell>-d$, and
\item $M_{\ell}\neq 0$ for each integer $\ell\leq -d$.
\end{enumerate}
\item 
\label{supp dim p>0}
If $\dim(\Supp_R(M))>0$ and each element in $M$ is annihilated by a nonzero polynomial in $R$, then 
\[M_{\ell}\neq 0\quad \forall \ell\in \ZZ.\]
\end{enumerate}
In particular, if each element in $M$ is annihilated by a nonzero polynomial in $R$, then $M_{\ell}\neq 0$ for every integer $\ell\leq -d$.
\end{theorem}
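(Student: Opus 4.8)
The plan is to argue everything by direct, characteristic‑free computations with the Euler operator $\theta:=\sum_{i}x_i\partial_i$ and with the divided‑power operators $\partial_i^{[n]}\in\scr{D}$ (in characteristic $0$ one may simply read $\partial_i^{[n]}$ as $\partial_i^{n}$). The engine is a single local computation: \emph{if $M$ is Eulerian and $0\neq m\in M$ is homogeneous with $\frakm m=0$, then $\deg(m)=-d$ and $\partial_1^{[n]}m\neq 0$ for all $n\geq 0$}. Indeed, applying $\partial_i$ to $x_i m=0$ and using $\partial_i x_i=x_i\partial_i+1$ gives $x_i\partial_i m=-m$, so $\theta m=-dm$; but the Eulerian condition says $\theta m=\deg(m)\cdot m$, whence $\deg(m)=-d$; and from $[\partial_1^{[n]},x_1]=\partial_1^{[n-1]}$ together with $x_1 m=0$ one gets by induction $x_1^{n}\partial_1^{[n]}m=(-1)^{n}m\neq 0$, so $\partial_1^{[n]}m\neq 0$, an element of $M_{-d-n}$. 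For Part (1): $\dim(\Supp_R(M))=0$ forces the annihilator of any nonzero homogeneous element to be a homogeneous ideal with radical $\frakm$, so $M=\Gamma_{\frakm}(M)$. Given $0\neq m\in M$ homogeneous, take $k$ minimal with $\frakm^{k}m=0$; then some degree-$(k-1)$ monomial $x^{\alpha}$ yields $m':=x^{\alpha}m\neq 0$ with $\frakm m'=0$, so by the local computation $\deg(m')=-d$ and $\deg(m)=-d-(k-1)\leq -d$: this is (1a). The element $m'$ lies in $M_{-d}$, so $M_{-d}\neq 0$, and $0\neq\partial_1^{[n]}m'\in M_{-d-n}$ for all $n\geq 0$: this is (1b).

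For Part (2) I would first reduce to the case $\Gamma_{\frakm}(M)=0$. Since $\Gamma_{\frakm}(M)$ is a graded $\scr{D}$-submodule of $M$, the quotient $M':=M/\Gamma_{\frakm}(M)$ is again Eulerian, still has every element killed by a nonzero polynomial, is nonzero (else $\dim(\Supp_R(M))=0$), satisfies $\Gamma_{\frakm}(M')=0$ and hence $\dim(\Supp_R(M'))>0$, and has $M_\ell\twoheadrightarrow M'_\ell$ for all $\ell$; so it suffices to prove the conclusion for $M'$. Assuming $\Gamma_{\frakm}(M)=0$, I claim (i) $M_{\ell_0}\neq 0\implies M_\ell\neq 0$ for all $\ell\geq\ell_0$, and (ii) $M$ is not bounded below; then for any $\ell$ one picks $\ell_0\leq\ell$ with $M_{\ell_0}\neq 0$ by (ii) and applies (i). For (i): a nonzero homogeneous $m\in M_{\ell_0}$ is not in $\Gamma_{\frakm}(M)=0$, so $\frakm m\neq 0$, so $x_i m\neq 0$ for some $i$; since $\frakm x\subseteq\Gamma_{\frakm}(M)$ forces $x\in\Gamma_{\frakm}(M)$, one can keep $x_i m\notin\Gamma_{\frakm}(M)$ and iterate, producing nonzero homogeneous elements in every degree $\geq\ell_0$.

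For (ii): if $M$ were bounded below with smallest nonzero degree $\ell_{\min}$ and $0\neq m\in M_{\ell_{\min}}$, then $\partial^{[\alpha]}m\in M_{\ell_{\min}-|\alpha|}=0$ for all $\alpha\neq 0$; writing any element of $\scr{D}$ as an $R$-linear combination of the $\partial^{[\alpha]}$ and using the divided‑power Leibniz rule shows $\scr{D}m=Rm$ and that $\fraka:=\operatorname{Ann}_{R}(m)$ is stable under every $\partial^{[\alpha]}$. But a proper ideal of $\kk[x_1,\dots,x_d]$ stable under all divided‑power derivatives is $(0)$: if $0\neq f\in\fraka$ and $x^{\beta}$ is the graded‑lexicographically largest monomial occurring in $f$, then $\partial^{[\beta]}f$ equals the coefficient of $x^{\beta}$, a nonzero constant, forcing $1\in\fraka$. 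Hence $\fraka=(0)$, so $Rm\cong R$ and $m$ is annihilated by no nonzero polynomial, contradicting the hypothesis; this gives (ii), and Part (2). For the last assertion, the torsion hypothesis forces $\dim(\Supp_R(M))<d$ (otherwise $(0)\in\Supp_R(M)$ yields a non‑torsion element), so $\dim(\Supp_R(M))$ is either $0$ (apply (1b)) or positive (apply (2)), and in both cases $M_\ell\neq 0$ for all $\ell\leq -d$. The computations are routine; the step I expect to be the real obstacle is (ii) in Part (2) — seeing that the bottom graded piece of a graded $\scr{D}$-module over a polynomial ring generates a free rank‑one submodule — together with justifying the reduction to $\Gamma_{\frakm}(M)=0$ and making sure the commutation identities and the "proper differentially stable ideal is zero" fact are used only in forms valid in all characteristics.
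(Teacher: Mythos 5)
Your approach diverges from the paper's on both parts. Part (2) is correct and gives a genuinely different---and arguably cleaner---proof; part (1) has a real gap in positive characteristic.

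For part (1), the paper simply invokes \cite[Theorem~2.4]{LyubeznikDMod} together with \cite[Theorem~5.6]{MaZhang} to conclude that an Eulerian graded $\scr{D}$-module supported at $\frakm$ is a \emph{degree-preserving} direct sum of copies of $H^d_{\frakm}(R)$, from which (1a) and (1b) are read off. Your argument is more self-contained, but its keystone---"if $\frakm m'=0$ then $\deg(m')=-d$"---is derived only from the identity $\theta m'=-d\,m'$, i.e.\ the $t=1$ Eulerian relation. Over a field of characteristic $p>0$ this yields only $\deg(m')\equiv -d\pmod p$, because $\theta m'=\deg(m')\,m'$ and $\theta m'=-d\,m'$ are equalities of elements of $M$ with coefficients in $\kk$, not in $\ZZ$; nothing so far excludes $\deg(m')=-d+kp$ with $k>0$, so (1a) is unproven. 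The gap is fixable using the \emph{full} Eulerian hypothesis: the same induction you used for $\partial_1^{[n]}$ gives $x_i^{t_i}\partial_i^{[t_i]}m'=(-1)^{t_i}m'$ for each $i$, so the $t$-th Euler operator applied to $m'$ yields $\binom{t+d-1}{d-1}(-1)^t m'=\binom{\deg m'}{t}m'$, i.e.\ $\binom{\deg m'}{t}\equiv\binom{-d}{t}\pmod p$ for all $t\geq 1$; equivalently $(1+T)^{\deg(m')+d}=1$ in $\kk[[T]]$, which forces $\deg(m')+d=0$. Without some such supplement (or an appeal to the structure theorem, as the paper does), part (1) is incomplete in characteristic $p$.

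Part (2) and the concluding "in particular" sentence are correct, and your route is a genuine alternative. After the same reduction to $\overline M=M/\Gamma_{\frakm}(M)$ (Eulerian by \cite[Proposition~2.8]{MaZhang}, or directly since Euler operators descend to graded quotients), the paper takes a nonzero $z$ of minimal degree and a homogeneous $f$ of minimal degree with $fz=0$, shows $(\partial f/\partial x_i)z=0$, and in characteristic $p$ runs an infinite Frobenius descent $f(x)=g(x^{p^e})$ using Lemma~\ref{lem: derivative inseparable} and Lucas's theorem. You instead observe that $\partial^{[\alpha]}z=0$ for all $\alpha\neq 0$ forces $\operatorname{Ann}_R(z)$ to be stable under every $\partial^{[\alpha]}$ via the divided-power Leibniz rule, and that the only proper ideal of $R$ stable under all $\partial^{[\alpha]}$ is $(0)$ (apply $\partial^{[\beta]}$ for $\beta$ the exponent of a top-degree monomial of $f$). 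That single characteristic-free observation replaces the entire case split and Frobenius descent in the proof of Theorem~\ref{thm: nonvanishing negative degree}(3), and I think it is a nice simplification. Your claim (i) (upward propagation of nonvanishing from $\Gamma_{\frakm}(M)=0$) and the deduction of the last sentence from (1b) and (2) via $\dim\Supp_R(M)<d$ are both correct.
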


Theorem \ref{intro thm: nonzero in negative degree} vastly generalizes \cite[Theorems 1.3 and 1.6]{PuthenpurakalGradedComponents} (which concern with local cohomology modules in equal-characteristic 0). 

Combining Theorems \ref{Kunneth formula: intro} and \ref{intro thm: nonzero in negative degree} also produces the following consequence on cohomological dimension:

\begin{theorem}[= Theorem \ref{thm: optimal in polynomial case}]
\label{intro thm: cd poly ring}
Let $R,S$ be polynomial rings over the same field $\kk$. Let $I,J$ be nonzero homogeneous ideals in $R,S$ respectively. Then
\[\cd_{I\# J}(R\# S)= \cd_I(R)+\cd_J(S)-1.\]
\end{theorem}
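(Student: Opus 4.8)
The plan is to combine the Künneth formula of Theorem~\ref{Kunneth formula: intro} with the nonvanishing statement of Theorem~\ref{intro thm: nonzero in negative degree}, so that both inequalities $\cd_{I\#J}(R\#S)\leq \cd_I(R)+\cd_J(S)-1$ and $\cd_{I\#J}(R\#S)\geq \cd_I(R)+\cd_J(S)-1$ are established. Write $c=\cd_I(R)$ and $e=\cd_J(S)$; since $I$ and $J$ are nonzero, $c\geq 1$ and $e\geq 1$, so $k:=c+e-1\geq 1$.

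For the upper bound, apply Theorem~\ref{Kunneth formula: intro} with $M=R$ and $N=S$. For $\ell \geq 2$ the module $H^\ell_{I\#J}(R\#S)$ decomposes into Segré products built from $H^i_I(R)$ and $H^j_J(S)$ with either $i=\ell$, $j=\ell$, or $i+j=\ell+1$; and in the four-term exact sequence the cohomology in degrees $0,1$ involves only $R$, $S$, and their saturations. If $\ell > k=c+e-1$, then in every summand one of the two local-cohomology indices exceeds $c$ or $e$ respectively (for the $i+j=\ell+1\geq c+e+1$ case, one cannot have both $i\leq c$ and $j\leq e$), so each summand vanishes; hence $H^\ell_{I\#J}(R\#S)=0$ for $\ell>k$, giving $\cd_{I\#J}(R\#S)\leq k$.

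For the lower bound I would exhibit a nonzero graded piece of $H^k_{I\#J}(R\#S)$. By definition of $c$ and $e$ the modules $H^c_I(R)$ and $H^e_J(S)$ are nonzero; they are Eulerian graded $\scr{D}$-modules whose supports are contained in $V(I)\subsetneq \Spec R$ and $V(J)\subsetneq \Spec S$, so every element is killed by a nonzero polynomial. By the final assertion of Theorem~\ref{intro thm: nonzero in negative degree}, $\bigl(H^c_I(R)\bigr)_a\neq 0$ and $\bigl(H^e_J(S)\bigr)_b\neq 0$ for all sufficiently negative $a$ (indeed for all $a\leq -\dim R$, and all $b\leq -\dim S$). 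Choose a common such degree $t\ll 0$ with $t\leq -\dim R$ and $t\leq -\dim S$; then the summand $H^c_I(R)\#H^e_J(S)$ of $H^k_{I\#J}(R\#S)$, which appears because $c+e=k+1$, has nonzero graded piece in degree $t$, namely $\bigl(H^c_I(R)\bigr)_t\otimes_\kk \bigl(H^e_J(S)\bigr)_t$. Hence $H^k_{I\#J}(R\#S)\neq 0$ and $\cd_{I\#J}(R\#S)\geq k$. (When $k=1$ one instead reads off nonvanishing of $H^1_{I\#J}(R\#S)$ from the four-term exact sequence together with the fact that $R\#S\to R^{sat}_I\#S^{sat}_J$ cannot be an isomorphism; the degree-$t$ argument still applies to the case $c=e=1$ via the $i+j=2$ summand.)

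The main obstacle I anticipate is ensuring that the summand $H^c_I(R)\#H^e_J(S)$ genuinely survives, i.e.\ that the Segré product does not accidentally vanish in the relevant degrees; this is precisely what the refined degree-bound in Theorem~\ref{intro thm: nonzero in negative degree}\eqref{supp dim 0}--\eqref{supp dim p>0} is engineered to prevent, since it guarantees a \emph{common} negative degree $t$ in which both factors are nonzero, and $\bigl(M\#N\bigr)_t = M_t\otimes_\kk N_t$ is then nonzero because $\kk$ is a field. A secondary point of care is the boundary case where $c=1$ or $e=1$, where one must argue via the four-term exact sequence rather than the isomorphisms for $k\geq 2$; here one uses that $R$ is a domain with $R^{sat}_I\neq R$ (as $I\neq 0$), so the cokernel $M^{sat}_I\#N^{sat}_J \,/\, \operatorname{image}$ is nonzero and feeds a nonzero $H^1$.
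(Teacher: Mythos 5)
Your proposal follows the same route as the paper: combine the K\"{u}nneth formula (Theorem \ref{Kunneth formula: two components}) with the upper bound of Theorem \ref{upper bound on cd} and the nonvanishing-in-negative-degrees result for Eulerian graded $\scr{D}$-modules (Theorem \ref{nonzero in negative degree}), the last step being exactly how the paper concludes $H^{\cd_I(R)}_I(R)\#H^{\cd_J(S)}_J(S)\neq 0$ so that the $i+j=k+1$ summand survives for $k=\cd_I(R)+\cd_J(S)-1\geq 2$.

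One detail in your treatment of the boundary case $c=e=1$ (i.e.\ $k=1$) is off. You write that ``the degree-$t$ argument still applies to the case $c=e=1$ via the $i+j=2$ summand,'' but there is no such summand available: the direct-sum isomorphism (\ref{thm: iso}) only holds for $k\geq 2$, and the four-term exact sequence (\ref{thm: exact sequence}) does not contain the term $\bigoplus_{i+j=2}H^i_I(M)\#H^j_J(N)$. Trying to read $H^1_{I\#J}(R\#S)$ off of $H^1_I(R)\#H^1_J(S)$ is not directly licensed by the formula; the cokernel of $R\#S\to R^{sat}_I\#S^{sat}_J$ is filtered by $H^1_I(R)\#S^{sat}_J$ and $R\#H^1_J(S)$, not by $H^1_I(R)\#H^1_J(S)$. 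Fortunately the $k=1$ case is actually trivial and needs none of the $\scr{D}$-module machinery: since $I\neq 0$ and $J\neq 0$ one has $I_n\neq 0$ and $J_n\neq 0$ for $n\gg 0$, so $I\#J\neq 0$; as $R\#S$ is a domain, $I\#J$ is not nilpotent, so $\cd_{I\#J}(R\#S)\geq 1$, which combined with Theorem \ref{upper bound on cd}(2) already gives equality. Replacing your parenthetical by this observation closes the gap cleanly.
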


The article is organized as follows. In \S\ref{section: preliminaries}, we collect some materials on local cohomology and graded rings which are necessary for the subsequent sections. In \S\ref{Kunneth formula}, we prove Theorem \ref{Kunneth formula: intro}. \S\ref{depth section} contains an application to the depth of Segr\'{e} products of standard graded rings. Finally, we investigate asymptotic behaviors of Eulerian graded $\scr{D}$-modules and applications to cohomological dimension of Segr\'{e} products of polynomial rings in \S\ref{graded D-module and cd section}; in particular, this is where Theorems \ref{intro thm: nonzero in negative degree} and \ref{intro thm: cd poly ring} are proved.

\section{Preliminaries on local cohomology and graded rings}
\label{section: preliminaries}
To ease notations and technicalities, throughout this article we will focus on standard graded rings. Recall that an $\NN$-graded ring $R$ is a standard graded ring if $R\cong \kk[x_1,\dots,x_n]/\frakb$ where $\kk$ is a field and $\frakb$ is a homogeneous ideal in the polynomial ring $\kk[x_1,\dots,x_n]$ equipped with the standard grading: $\deg(x_i)=1$ for each $x_i$ and $\deg(c)=0$ for each $c\in \kk$. For each such $R$, we will denote by $\frakm$ the homogeneous maximal ideal (the ideal generated by the degree-$1$ piece $R_1$). We will denote the set of homogeneous prime ideals by $\hSpec(R)$ (equipped with the usual Zariski topology). Note that $\Proj(R)=\hSpec(R)\backslash\{\frakm\}$. We will denote by $\Mod_R$ the category of $\ZZ$-graded $R$-modules in which the objects are $\ZZ$-graded $R$-modules and the morphisms are graded $R$-module homomorphisms. We will denote by $M(a)$ the $\ZZ$-graded $R$-module such that $M(a)_i=M_{a+i}$ ($i\in \ZZ$) for each integer $a$ and each $M\in \Mod_R$.

We fix a field $\kk$; all graded rings considered in this article are standard graded rings over the same field $\kk$.

Let $I$ be a homogeneous ideal of $R$. The $I$-torsion functor $\Gamma_I:\Mod_R\to\Mod_R$ is defined as follows: 
\begin{enumerate}
\item $\Gamma_I(M)=\{z\in M\mid I^nz=0\ {\rm for\ some\ integer\ }n\}$ for each $M\in \Mod_R$;
\item $\Gamma_I(f)=f_{\Gamma_I(M)}:\Gamma_I(M)\to \Gamma_I(N)$ for each morphism $f:M\to N$ in $\Mod_R$.
\end{enumerate} 
One can check that $\Gamma_I$ is left-exact. Its $j$-th right derived functor $\mathscr{R}^j\Gamma_I$ is called the $j$-th local cohomology and is denoted by $H^j_I(-)$; that is, for each $M\in \Mod_R$, the $j$-th local cohomology of $M$ supported in $I$ is 
\[H^j_I(M)\cong H^j(\Gamma_I(E^{\bullet}))\]
where $E^{\bullet}$ is an injective resolution of $M$ in $\Mod_R$.

\begin{remark}
\label{structure of injectives}
We collect some basic facts on injective modules in $\Mod_R$ here ({\it cf.} \cite[\S13.2]{BrodmannSharp}).
\begin{enumerate}
\item Each indecomposable injective object in $\Mod_R$ has the form $\E(R/P)(\ell)$ for an integer $\ell$ where $P$ is a homogeneous prime ideal of $R$ and $\E(-)$ denotes the injective hull in $\Mod_R$. 

\item \label{structure of injective hull}
Each element in $\E(R/P)$ is annihilated by a power of $P$; if a homogeneous element $f$ is not in $P$, then
\[\E(R/P)(-\deg(f))\xrightarrow{f}\E(R/P)\]
is an isomorphism in $\Mod_R$.

\item An object $E\in \Mod_R$ is an injective object if and only if $E$ is a direct sum of some copies of $\E(R/P_{\alpha})(\ell_{\alpha})$ for homogeneous prime ideals $P_{\alpha}$ and integers $\ell_\alpha$. 

\item \label{I-torsion of injective}
Let $I$ be an homogeneous ideal of $R$, then it follows from (\ref{structure of injective hull}) that
\[\Gamma_I(\E(R/P)(\ell))=\begin{cases} \E(R/P)(\ell) & I\subseteq P\\ 0& {\rm otherwise}\end{cases}\]
\end{enumerate}
\end{remark}

This can be interpreted geometrically as follows.
\begin{remark}
\label{decompose injective res}
Let $I$ be a homogeneous ideal of $R$ and $M\in \Mod_R$. Set $X:=\Proj(R)$ and $\widetilde{M}$ to be the sheaf on $X$ induced by $M$. Set $Z:=\Proj(R/I)\subseteq X$ and $U:=X\backslash Z$.

It follows from \cite[2.1.5]{EGAIII1} ({\it cf.} \cite[\S 20.3]{BrodmannSharp}) that there is an exact sequence in $\Mod_R$
\[0\to H^0_I(M)\to M\to \bigoplus_{\ell\in \ZZ}H^0(U,\widetilde{M}(\ell))\to H^1_I(M)\to 0\]
and that there are isomorphisms in $\Mod_R$
\[ \bigoplus_{\ell\in \ZZ}H^i(U,\widetilde{M}(\ell))\cong H^{i+1}_I(M)\quad \forall i\geq 1.\]

Let $E^{\bullet}$ be an injective resolution of $M$ in $\Mod_R$. For each $E^i$, we have a decomposition $E^i\cong \bigoplus_{\alpha}\E(R/P_{\alpha})(\ell_{\alpha})$.  Set $\leftindex^{\prime}{E}{^\bullet}:=\Gamma_I(E^{\bullet})$. Then $\leftindex^{\prime}{E}{^i}=\Gamma_I(E^i)\cong\bigoplus_{I\subseteq P_\alpha}\E(R/P_{\alpha})(\ell_{\alpha})$ by Remark \ref{structure of injectives}. Hence $\leftindex^{\prime}{E}{^\bullet}$ is a subcomplex of $E^{\bullet}$. Then set $\leftindex^{\prime\prime}{E}{^\bullet}:= E^{\bullet}/\leftindex^{\prime}{E}^{\bullet}$. We have a short exact sequence of complexes: 
\[0\to \leftindex^{\prime}{E}{^\bullet}\to E^{\bullet} \to \leftindex^{\prime\prime}{E}{^\bullet}\to 0.\] 
Note that $H^i(\leftindex^{\prime}{E}{^\bullet})\cong H^i_I(M)$ and that $H^i(E^{\bullet})=\begin{cases}M&i=0\\ 0&i\neq 0 \end{cases}$. Then it follows that
\[H^0(\leftindex^{\prime\prime}{E}{^\bullet})\cong \bigoplus_{\ell\in \ZZ}H^0(U,\widetilde{M}(\ell))\ {\rm and\ }  
H^i(\leftindex^{\prime\prime}{E}{^\bullet})\cong  H^{i+1}_I(M)\cong \bigoplus_{\ell\in \ZZ}H^i(U,\widetilde{M}(\ell))\ {\rm for\ } i\geq 1
\]
\end{remark}

Since we will need to consider the module $\bigoplus_{\ell\in \ZZ}H^0(U,\widetilde{M}(\ell))$ in the next sections, we introduce the following definition.
\begin{defn}
\label{defn: sat}
Let $R$ be a standard graded ring and $I$ be a homogeneous ideal. Set $X:=\Proj(R)$, $Z:=\Proj(R/I)\subseteq X$ and $U:=X\backslash Z$. For each $M\in \Mod_R$, we define the \emph{$I$-saturation of $M$} as
\[M^{sat}_I:=\bigoplus_{\ell\in \ZZ}H^0(U,\widetilde{M}(\ell))\]
where $\widetilde{M}$ is the sheaf on $X$ induced by $M$.

In particular, when $I=\frakm$ (the homogeneous maximal ideal of $R$), $U$ coincides with $X$ and we define
\[M^{sat}:=\bigoplus_{\ell\in \ZZ}H^0(X,\widetilde{M}(\ell)).\]
\end{defn}

The following is well-known.
\begin{remark}
\label{torsion or iso imply vanishing}
Let $M$ be an object in $\Mod_R$. 
\begin{enumerate}
\item 
If $M=\Gamma_I(M)$, then $H^k_I(M) = 0$ for $k>0$.
\item
If there exists a homogeneous element $f \in I$ of degree $\ell$ such that $M(-\ell)\xrightarrow{f}M$ is an isomorphism, then $H^k_I(M) = 0$ for all $k$.
\end{enumerate}
\end{remark}

Next we recall the definition of the Segr\'{e} product of two standard graded rings $R$ and $S$ ({\it cf.} \cite[p.~811]{ChowUnmixedness}) and we will follow the notations in \cite{GotoWatanabeI}.
\begin{defn}
\label{defn graded and Segre prod}
Let $R,S$ be standard graded rings and let $M\in \Mod_R,N\in\Mod_S$. The \emph{Segr\'{e} product} of $M$ and $N$ is defined as
\[M\#N:=\bigoplus_{n\in \ZZ}M_n\otimes_{\kk} N_n.\]
\end{defn}

Some basic properties of $M\#N$ can be summarized as follows ({\it cf.} \cite[\S4]{GotoWatanabeI}).
\begin{remark}
\label{Segre is exact}
Let $R,S,M,N$ as in Definition \ref{defn graded and Segre prod}. Set $T:=R\#S$.
\begin{enumerate}
\item One can check that 
\[M\#-: \Mod_S\to \Mod_T\quad {\rm and}\quad -\#N: \Mod_R\to \Mod_T\]
are both exact functors and commute with direct sum; this is \cite[4.0.3]{GotoWatanabeI}.
\item Assume that $M\to M'$ is injective in $\Mod_R$ and $N\to N'$ is injective in $\Mod_S$. Then $M\#N\to M'\#N'$ is injective in $\Mod_{R\#S}$. This can be see as follows. The map $M\#N\to M'\#N'$ is the composition of
\[M\#N\to M'\#N\to M'\#N'\]
in which both maps are injective since $-\#N$ and $M'\#-$ are exact functors.
\end{enumerate}
\end{remark}

Let $I$ and $J$ be homogeneous ideals of $R$ and $S$, respectively. Set $\fraka := I \# J$. Let $M\in \Mod_R$ and $N\in \Mod_S$. The following is a straightforward consequence of Remark \ref{torsion or iso imply vanishing}.
\begin{prop}
\label{Segre preserve torsion and auto}
Let $M\in \Mod_R$ and $N\in \Mod_S$.
\begin{enumerate}
\item	\label{torsion imply vanishing} If $M$ is $I$-torsion, then $M \# N$ is $I \# J$-torsion. Consequently, in this case, $H^k_{I\#J}(M\#N)=0$ for all $k>0$.
\item \label{iso imply vanishing} If there exists a homogeneous element $f \in I$ and a homogenous element $g\in J$ such that $M(-\deg(f))\xrightarrow{f}M$ is an isomorphism in $\Mod_R$ and $N(-\deg(g))\xrightarrow{g}N$ is an isomorphism in $\Mod_S$, then the multiplication
\[(M\#N)(-(\deg(f)\deg(g)))\xrightarrow{f^{\deg(g)}\#g^{\deg(f)}}M\#N\]
is an isomorphism in $\Mod_T$. Consequently, in this case, $H^k_{I\#J}(M\#N)=0$ for all $k$.
\end{enumerate}
\end{prop}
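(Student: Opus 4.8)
The plan is to deduce each statement directly from Remark~\ref{torsion or iso imply vanishing}, applied to the $R\#S$-module $M\#N$ and the ideal $I\#J$: once we know that $M\#N$ is $(I\#J)$-torsion (resp.\ that it admits the asserted automorphism), the vanishing of local cohomology is immediate, so the content is the algebraic claim about $M\#N$ in each case.

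For the first item, I would begin by recording the elementary containment $(I\#J)^s\subseteq I^s\#J^s$ inside $R\#S$ for every $s\ge 1$: a product $(a_1\otimes b_1)\cdots(a_s\otimes b_s)$ of homogeneous generators of $I\#J$ equals $(a_1\cdots a_s)\otimes(b_1\cdots b_s)$, with $a_1\cdots a_s\in I^s$ and $b_1\cdots b_s\in J^s$ of equal degree. Now take a homogeneous pure tensor $m\otimes n\in M_d\otimes_\kk N_d\subseteq M\#N$. Since $M$ is $I$-torsion there is an $s$ with $I^s m=0$, and then for any homogeneous $a\otimes b\in I^s\#J^s$ we have $(a\otimes b)(m\otimes n)=(am)\otimes(bn)=0$; hence $(I^s\#J^s)(m\otimes n)=0$, and a fortiori $(I\#J)^s(m\otimes n)=0$. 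Any element of $M\#N$ is a finite sum of such homogeneous pure tensors, so a single sufficiently high power of $I\#J$ annihilates it; thus $M\#N=\Gamma_{I\#J}(M\#N)$, and the first item of Remark~\ref{torsion or iso imply vanishing} gives $H^k_{I\#J}(M\#N)=0$ for $k>0$. (Alternatively: write $M=\bigcup_s(0:_M I^s)$, note that each $(0:_M I^s)\#N$ is annihilated by $(I\#J)^s$, and use that $-\#N$ is exact and commutes with direct sums by Remark~\ref{Segre is exact}, hence with the directed union.)

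For the second item, set $p=\deg(f)$ and $q=\deg(g)$. Iterating the hypothesis, multiplication by $f^{q}$ is an isomorphism $M(-pq)\xrightarrow{\,\sim\,}M$ in $\Mod_R$, being a composite of $q$ shifted copies of $M(-p)\xrightarrow{f}M$, and likewise multiplication by $g^{p}$ is an isomorphism $N(-pq)\xrightarrow{\,\sim\,}N$ in $\Mod_S$. Applying the bifunctor $\#$ to these two isomorphisms and using the degreewise identity $M(-pq)\#N(-pq)=(M\#N)(-pq)$, we obtain that $f^{q}\#g^{p}$ is an isomorphism $(M\#N)(-pq)\xrightarrow{\,\sim\,}M\#N$ in $\Mod_{R\#S}$; its inverse is the $\#$-product of the two inverse maps, by functoriality of $\#$ in each variable (Remark~\ref{Segre is exact}). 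Unwinding the identifications, this isomorphism is precisely multiplication by the homogeneous element $f^{q}\otimes g^{p}\in I_{pq}\otimes_\kk J_{pq}\subseteq (I\#J)_{pq}$, which has degree $pq=\deg(f)\deg(g)$. The second item of Remark~\ref{torsion or iso imply vanishing}, applied to $M\#N$, the ideal $I\#J$, and the element $f^{q}\otimes g^{p}$, then yields $H^k_{I\#J}(M\#N)=0$ for all $k$.

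Both arguments are essentially bookkeeping, so I do not expect a genuine obstacle. The two points deserving a moment's care are the containment $(I\#J)^s\subseteq I^s\#J^s$ in the first item and, in the second, checking that the $\#$-product of ``multiplication by $f^q$'' and ``multiplication by $g^p$'' really is multiplication by $f^q\otimes g^p$ on $M\#N$ after the identification $M(-pq)\#N(-pq)=(M\#N)(-pq)$.
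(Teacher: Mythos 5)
Your proposal is correct and matches the paper's intent: the paper gives no explicit argument, merely calling Proposition~\ref{Segre preserve torsion and auto} ``a straightforward consequence'' of Remark~\ref{torsion or iso imply vanishing}, and your two computations (the containment $(I\#J)^s\subseteq I^s\#J^s$ plus finite-sum bookkeeping for torsion, and the identification of $f^{\deg g}\#g^{\deg f}$ with multiplication by the degree-$\deg(f)\deg(g)$ element $f^{\deg g}\otimes g^{\deg f}\in I\#J$) supply exactly the details that remark requires.
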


We recall the following well-known fact in homological algebra ({\it cf.} \cite[p.~205]{HartshorneAG}).
\begin{remark}
\label{gamma res computes derived}
Let $\scr{A}$ be abelian category with enough injective objects and let $\Gamma:\scr{A}\to \scr{A}$ be an covariant left-exact functor. Let $M$ be an object in $\scr{A}$. A complex 
\[C^{\bullet}:0\to C^0\to C^1\to\cdots\] 
is called a $\Gamma$-acyclic resolution of $M$ if
\begin{enumerate}
 \item $C^{\bullet}$ is a resolution of $M$; that is, $H^0(C^{\bullet})\cong M$ and $H^j(C^{\bullet})=0$ for all $j\neq 0$; and 
 \item $\mathscr{R}^j\Gamma(C^i)=0$ for all $i\geq 0$ and $j\neq 0$, where $\mathscr{R}^j\Gamma$ is the $j$-th right derived functor of $\Gamma$.
 \end{enumerate}
 If $C^{\bullet}$ is a $\Gamma$-acyclic resolution of $M$, then
\[\mathscr{R}^j\Gamma(M)\cong H^j(\Gamma(C^{\bullet})).\]
 \end{remark}

\begin{remark}
In general, the Segr\'{e} product of injective modules may no longer be an injective module over the Segr\'{e} product of rings. For instance, let $R=\kk[x,u]$ and $S=\kk[u,v]$. Set $M:=\E_R(R)$ and $N:=\E_S(\kk)\cong H^2_{(u,v)}(S)(-2)$. Then \cite[Example~V.5.6]{Stuckrad_Vogel_Book} shows that $\Ext^1_{R\#S}(\kk,M\#N)\neq 0$ and consequently $M\#N$ is not an injective $R\#S$-module.
\end{remark}

However, if $E^{\bullet}$ is an injective resolution of $M$ in $\Mod_R$ and $F^{\bullet}$ is an injective resolution of $N$ in $\Mod_S$, then one can construct a complex $E^{\bullet}\#F^{\bullet}$ such that the complex $\Gamma_{I\#J}(E^{\bullet}\#F^{\bullet})$ calculates local cohomology modules $H^k_{I\#J}(M\#N)$. We explain this next. 

\begin{defn}
\label{defn: segre of complexes}
Let $(A^{\bullet}, d^{\bullet}_A)$ be a complex in $\Mod_R$ and $(B^{\bullet}, d^{\bullet}_B)$ be a complex in $\Mod_S$. We construct a complex $(C^{\bullet},d^{\bullet}_C)$ as follows:
\begin{itemize}
\item $C^k := \bigoplus_{i+j=k} A^i \# B^j$ for each integer $k$;
\item $d^k_C = \sum_{i+j=k} d^i_A \# \id +(-1)^i (\id \# d^j_B))$, where $\id$ denotes the identity map.
\end{itemize}
We will denote this complex $(C^{\bullet},d^{\bullet}_C)$ by $A^{\bullet}\#B^{\bullet}$ and call it the {\it Segr\'{e} product of $A^{\bullet}$ and $B^{\bullet}$}.
\end{defn}

The following is \cite[Lemma 4.1.4]{GotoWatanabeI}:
\begin{lemma}
\label{cohomology of Segre prod of complex}
Let $A^{\bullet}, B^{\bullet},A^{\bullet}\#B^{\bullet}$ be complexes be as above. Then
\[H^k(A^{\bullet}\#B^{\bullet})\cong \bigoplus_{i+j=k}H^i(A^{\bullet})\#H^j(B^{\bullet}).\]
\end{lemma}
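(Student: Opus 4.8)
The plan is to reduce the statement to the classical Künneth theorem for complexes of $\kk$-vector spaces by decomposing everything into graded strands. For a complex $(A^\bullet,d^\bullet_A)$ in $\Mod_R$, each term splits as a $\kk$-vector space into its homogeneous pieces, $A^i=\bigoplus_{n\in\ZZ}(A^i)_n$, and since the differentials are degree-preserving we obtain a decomposition of complexes $A^\bullet=\bigoplus_{n\in\ZZ}(A^\bullet)_n$, where each strand $(A^\bullet)_n$ is an (ungraded) complex of $\kk$-vector spaces. Because cohomology commutes with arbitrary direct sums, $H^i(A^\bullet)=\bigoplus_n H^i\big((A^\bullet)_n\big)$, and in fact $H^i\big((A^\bullet)_n\big)=\big(H^i(A^\bullet)\big)_n$. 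The same applies to $B^\bullet$ in $\Mod_S$.

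Next I would unwind Definition \ref{defn: segre of complexes} strandwise. The degree-$n$ component of $(A^\bullet\#B^\bullet)^k=\bigoplus_{i+j=k}A^i\#B^j$ is $\bigoplus_{i+j=k}(A^i)_n\otimes_{\kk}(B^j)_n$, which is exactly degree $k$ of the total complex $\mathrm{Tot}\big((A^\bullet)_n\otimes_{\kk}(B^\bullet)_n\big)$; moreover the differential $d^k_C=\sum_{i+j=k}\big(d^i_A\#\id+(-1)^i(\id\#d^j_B)\big)$ restricts on the $n$-strand to precisely the usual sign-convention differential of this total complex. Hence $(A^\bullet\#B^\bullet)_n\cong\mathrm{Tot}\big((A^\bullet)_n\otimes_{\kk}(B^\bullet)_n\big)$ as complexes of $\kk$-vector spaces. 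I would then invoke the Künneth theorem over the field $\kk$: every complex of $\kk$-vector spaces is isomorphic (non-canonically) to the direct sum of its cohomology with zero differential and contractible two-term complexes, and $\otimes_{\kk}$ is exact and commutes with this decomposition, so there are no $\Tor$ correction terms and $H^k\big(\mathrm{Tot}((A^\bullet)_n\otimes_{\kk}(B^\bullet)_n)\big)\cong\bigoplus_{i+j=k}H^i\big((A^\bullet)_n\big)\otimes_{\kk}H^j\big((B^\bullet)_n\big)$.

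Summing over $n$, the left side becomes $H^k(A^\bullet\#B^\bullet)$ and the right side becomes $\bigoplus_{i+j=k}\bigoplus_n\big(H^i(A^\bullet)\big)_n\otimes_{\kk}\big(H^j(B^\bullet)\big)_n=\bigoplus_{i+j=k}H^i(A^\bullet)\#H^j(B^\bullet)$, which is the asserted formula. To finish I would check that this isomorphism is compatible with the grading and the $T=R\#S$-module structure, hence an isomorphism in $\Mod_T$: the Künneth map is induced on the level of cycles by $[\alpha]\otimes[\beta]\mapsto[\alpha\otimes\beta]$ for homogeneous cocycles $\alpha\in A^i$, $\beta\in B^j$ of equal degree, and this assignment is manifestly additive, degree-preserving, and intertwines the diagonal action of $T$, so assembling it over all strands yields a $\Mod_T$-morphism which we have just shown is bijective.

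The step I expect to require the most care is the bookkeeping in the middle paragraph: matching the signs of the Segré differential with those of the total-complex differential strand by strand, and ensuring the Künneth splitting descends to a well-defined map after summing over all $n$. Passing to strands over the field $\kk$ is precisely what makes these points routine; in particular it avoids the convergence questions one would otherwise have to address if one ran the spectral sequence of the (possibly unbounded) double complex $\big(A^i\#B^j\big)$ directly, even though on bounded-below complexes (such as the injective resolutions appearing in our intended applications) that spectral-sequence argument—using exactness of $-\#B^j$ and of $H^i(A^\bullet)\#-$ from Remark \ref{Segre is exact} to see that it degenerates at $E_2$—would be an equally valid alternative.
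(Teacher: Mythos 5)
Your proposal is correct, and it supplies a proof that the paper itself omits: the paper simply cites this statement as \cite[Lemma~4.1.4]{GotoWatanabeI} without argument, so there is no ``paper's own proof'' to compare against beyond that reference. Your strand-wise reduction is the right way to see it, and it matches the natural argument behind the Goto--Watanabe lemma. Two remarks. First, the key structural observation you make is exactly what makes the unbounded case harmless: in internal degree $n$, the Segr\'{e} complex $A^{\bullet}\#B^{\bullet}$ is literally the $\oplus$-total complex of $(A^{\bullet})_n\otimes_{\kk}(B^{\bullet})_n$, and every complex of $\kk$-vector spaces (bounded or not) splits as cohomology-with-zero-differential plus a direct sum of shifted two-term identity complexes; the $\oplus$-total complex of any summand involving a contractible factor is then itself a direct sum of contractible four-term pieces, hence acyclic, and the Künneth isomorphism drops out with no $\Tor$ terms and no convergence hypotheses. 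You flag precisely this point in your last paragraph, which is good, since a blind spectral-sequence argument on the double complex $(A^i\#B^j)$ would require a boundedness or conditional-convergence assumption not available in general. Second, your verification that the resulting isomorphism is $\Mod_T$-linear and degree-preserving is needed (the statement is in $\Mod_T$, not merely in $\kk$-vector spaces) and your check on cocycles $[\alpha]\otimes[\beta]\mapsto[\alpha\otimes\beta]$ handles it; it is worth stating explicitly that this map is the same for every strand $n$, so that reassembling over $n$ is unambiguous. No gaps.
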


The following theorem shows that local cohomology supported in the Segr\'{e} product of ideals can be computed using the Segr\'{e} product of injective resolutions.
\begin{theorem}
\label{Segre product produce a Gamma resolution}
Let $M$ be an object in $\Mod_R$ and $N$ be an object in $\Mod_S$. Let $0\to M\to E^{\bullet}$ (and $0\to N\to F^{\bullet}$) be an injective resolution in $\Mod_R$ (in $\Mod_S$, respectively). Then, for every homogeneous ideal $I$ of $R$ and every homogeneous ideal $J$ of $S$, the complex $E^{\bullet}\#F^{\bullet}$ is a $\Gamma_{I\#J}$-acyclic resolution of $M\# N$ in $\Mod_{R\#S}$. 

In particular, $H^k(\Gamma_{I\#J}(E^{\bullet}\#F^{\bullet}))\cong H^k_{I\#J}(M\#N)$ for every $k$.
\end{theorem}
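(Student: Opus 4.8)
The plan is to apply Remark~\ref{gamma res computes derived} with $\scr{A}=\Mod_{R\#S}$ and $\Gamma=\Gamma_{I\#J}$, so that two things must be checked: that $E^{\bullet}\#F^{\bullet}$ is a resolution of $M\#N$ in $\Mod_{R\#S}$, and that each term $C^i=\bigoplus_{p+q=i}E^p\#F^q$ of this complex satisfies $H^j_{I\#J}(C^i)=0$ for $j>0$, equivalently $H^j_{I\#J}(E^p\#F^q)=0$ for all $p,q\geq 0$ and $j>0$. The first point is immediate from Lemma~\ref{cohomology of Segre prod of complex}: since $0\to M\to E^{\bullet}$ and $0\to N\to F^{\bullet}$ are exact, $H^i(E^{\bullet})$ and $H^j(F^{\bullet})$ equal $M$ and $N$ for $i=j=0$ and vanish otherwise, hence $H^k(E^{\bullet}\#F^{\bullet})\cong\bigoplus_{i+j=k}H^i(E^{\bullet})\#H^j(F^{\bullet})$ is $M\#N$ for $k=0$ and $0$ for $k\neq 0$.

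For the second point I would first reduce to indecomposable injectives. By Remark~\ref{structure of injectives} we may write $E^p\cong\bigoplus_{\alpha}\E(R/P_{\alpha})(\ell_{\alpha})$ and $F^q\cong\bigoplus_{\beta}\E(S/Q_{\beta})(m_{\beta})$ with $P_\alpha\in\hSpec(R)$, $Q_\beta\in\hSpec(S)$ and $\ell_\alpha,m_\beta\in\ZZ$. Since the Segr\'e product commutes with direct sums in each variable (Remark~\ref{Segre is exact}) and local cohomology over the Noetherian ring $R\#S$ commutes with arbitrary direct sums, we get
\[
H^j_{I\#J}(E^p\#F^q)\cong\bigoplus_{\alpha,\beta}H^j_{I\#J}\big(\E(R/P_\alpha)(\ell_\alpha)\#\E(S/Q_\beta)(m_\beta)\big),
\]
so it suffices to show $H^j_{I\#J}(\E(R/P)(\ell)\#\E(S/Q)(m))=0$ for $j>0$, for all $P\in\hSpec(R)$, $Q\in\hSpec(S)$ and all $\ell,m\in\ZZ$.

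I would then split into cases via Proposition~\ref{Segre preserve torsion and auto} according to whether $I\subseteq P$ and whether $J\subseteq Q$. If $I\subseteq P$, then $\E(R/P)(\ell)$ is $I$-torsion by Remark~\ref{structure of injectives}(\ref{I-torsion of injective}), so $\E(R/P)(\ell)\#\E(S/Q)(m)$ is $I\#J$-torsion by Proposition~\ref{Segre preserve torsion and auto}(\ref{torsion imply vanishing}) and all its higher local cohomology vanishes; the case $J\subseteq Q$ is symmetric. If $I\not\subseteq P$ and $J\not\subseteq Q$, choose homogeneous $f\in I\setminus P$ and $g\in J\setminus Q$; by Remark~\ref{structure of injectives}(\ref{structure of injective hull}) multiplication by $f$ gives an isomorphism $\E(R/P)(\ell)(-\deg f)\to\E(R/P)(\ell)$ and likewise for $g$, so Proposition~\ref{Segre preserve torsion and auto}(\ref{iso imply vanishing}) yields $H^j_{I\#J}(\E(R/P)(\ell)\#\E(S/Q)(m))=0$ for \emph{all} $j$. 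These three cases exhaust all pairs $(P,Q)$, which finishes the verification; the ``in particular'' statement then follows from the last line of Remark~\ref{gamma res computes derived}. The genuine content has already been isolated in Proposition~\ref{Segre preserve torsion and auto}, so the only steps that need care are justifying that $H^j_{I\#J}$ commutes with the possibly infinite direct sums coming from the injective decomposition (valid since $R\#S$ is Noetherian, so $I\#J$ is finitely generated) and checking that the case analysis is exhaustive; I expect neither to be a real obstacle.
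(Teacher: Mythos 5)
Your proposal is correct and follows essentially the same route as the paper's own proof: apply Remark~\ref{gamma res computes derived}, use Lemma~\ref{cohomology of Segre prod of complex} for the resolution claim, reduce to indecomposable injectives via Remark~\ref{structure of injectives}, and then handle the two cases ($I\subseteq P$ or $J\subseteq Q$; neither) via Proposition~\ref{Segre preserve torsion and auto}. The only difference is that you spell out the justification for commuting $H^j_{I\#J}$ past the (possibly infinite) direct sums and track the degree shifts explicitly, steps the paper's proof leaves implicit.
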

\begin{proof}
Denote $E^{\bullet}\#F^{\bullet}$ by $C^{\bullet}$. According to Remark \ref{gamma res computes derived}, it suffices to check $C^{\bullet}$ is a resolution of $M\#N$ and $H^t_{I\#J}(C^k)=0$ for all $t\geq 1$ and all $k$.

Since $H^i_I(E^{\bullet})=0$ for $i\geq 1$ and $H^j_J(F^{\bullet})=0$ for $j\geq 1$, it follows from Lemma \ref{cohomology of Segre prod of complex} that $C^{\bullet}$ is a resolution of $M\#N$. It remains to show that $H^t_{I\#J}(C^k)=0$ for all $t\geq 1$ and all $k$.

It follows from Remark \ref{structure of injectives} that we are reduced to proving that 
\[H^t_{I\#J}(\E(R/P)\#\E(S/Q))=0\] 
for all $t\geq 1$, all $P\in \hSpec(R)$, and all $Q\in \hSpec(S)$. If $I\subseteq P$ or $J\subseteq Q$, then $\E(R/P)$ is $I$-torsion or $\E(S/Q)$ is $J$-torsion. Thus, $\E(R/P)\#\E(S/Q))$ is $I\#J$-torsion and hence $H^t_{I\#J}(\E(R/P)\#\E(S/Q))=0$ for all $t\geq 1$ according to Proposition \ref{Segre preserve torsion and auto}(\ref{torsion imply vanishing}). Otherwise, there is a homogeneous element $f\in I$ not in $P$ and a homogeneous element $g\in J$ not in $Q$. Then it follows from Proposition \ref{Segre preserve torsion and auto}(\ref{iso imply vanishing}) that $H^t_{I\#J}(\E(R/P)\#\E(S/Q))=0$ for all $t\geq 1$. This completes the proof of our theorem.
\end{proof}

\section{A K\"{u}nneth formula for local cohomology}
\label{Kunneth formula}
Before we proceed to our main theorems, we would like to fix some notations. 

\begin{notation}
Let $R$ and $S$ be standard graded rings. 
\begin{itemize}
\item For each homogeneous ideal $I$ in $R$, we will denote the complement of $\Proj(R/I)$ in $\Proj(R)$ by $U$; likewise, for each homogeneous ideal $J$ in $S$, we will denote the complement of $\Proj(S/J)$ in $\Proj(S)$ by $V$. 
\item We will denote $R\#S$ by $T$ and $I\#J$ by $\fraka$. 
\item Denote the homogeneous maximal ideal of $R$, $S$, and $T$ by $\frakm_R$, $\frakm_S$, and $\frakm_T$, respectively. Note that $\frakm_T=\frakm_R\# \frakm_S$.   
\item For each $M\in \Mod_R$, we will denote by $\widetilde{M}$ the sheaf induced by $M$ on $\Proj(R)$; likewise we will denote by $\widetilde{N}$ the sheaf induced by $M$ on $\Proj(S)$. 
\item Recall $M^{sat}_I:=\bigoplus_{\ell\in \ZZ}H^0(U,\widetilde{M}(\ell))$ is the $I$-saturation of $M$ and $N^{sat}_J:=\bigoplus_{\ell\in \ZZ}H^0(V,\widetilde{N}(\ell))$ is the $J$-saturation of $N$. 
\item When $I=\frakm_R$, we denote by $M^{sat}$ the $\frakm_R$-saturation of $M$.
\end{itemize} 
\end{notation}

With these notations, we can state our main theorem:

\begin{theorem}
\label{Kunneth formula: two components}
Given the notations as above, we have an exact sequence
\begin{equation}
\label{thm: exact sequence}
0\to H^0_{\fraka}(M\#N)\to M\#N\to M^{sat}_I\#N^{sat}_J\to H^1_{\fraka}(M\#N)\to 0
\end{equation}
and isomorphisms
\begin{equation}
\label{thm: iso}
H^k_{\fraka}(M\#N)\cong \Big( M^{sat}_I\#H^k_J(N)\Big)\oplus \Big(H^k_I(M)\#N^{sat}_J\Big)\oplus \Big(\bigoplus_{i+j=k+1}H^i_I(M)\#H^j_J(N)\Big)
\end{equation}
for all $k\geq 2$.
\end{theorem}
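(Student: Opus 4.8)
The strategy is to take injective resolutions $0\to M\to E^\bullet$ in $\Mod_R$ and $0\to N\to F^\bullet$ in $\Mod_S$, form the Segr\'{e} product complex $C^\bullet:=E^\bullet\#F^\bullet$, and exploit Theorem~\ref{Segre product produce a Gamma resolution}, which tells us $H^k_{\fraka}(M\#N)\cong H^k(\Gamma_{\fraka}(C^\bullet))$. The key is to split each injective $E^i$ as $E^i\cong \leftindex^{\prime}{E}{^i}\oplus \leftindex^{\prime\prime}{E}{^i}$, where $\leftindex^{\prime}{E}{^i}=\Gamma_I(E^i)$ collects the summands $\E(R/P)(\ell)$ with $I\subseteq P$ and $\leftindex^{\prime\prime}{E}{^i}$ collects the rest; similarly $F^j\cong \leftindex^{\prime}{F}{^j}\oplus \leftindex^{\prime\prime}{F}{^j}$ with $\leftindex^{\prime}{F}{^j}=\Gamma_J(F^j)$. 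By Remark~\ref{decompose injective res}, $H^i(\leftindex^{\prime}{E}{^\bullet})\cong H^i_I(M)$, while $\leftindex^{\prime\prime}{E}{^\bullet}$ has cohomology $M^{sat}_I$ in degree $0$ and $H^{i+1}_I(M)$ in degrees $i\geq 1$; likewise for $F^\bullet$. Because $\#$ distributes over direct sums (Remark~\ref{Segre is exact}), $C^\bullet$ decomposes into four bicomplexes built from $\leftindex^{\prime}{E}{^\bullet}\#\leftindex^{\prime}{F}{^\bullet}$, $\leftindex^{\prime}{E}{^\bullet}\#\leftindex^{\prime\prime}{F}{^\bullet}$, $\leftindex^{\prime\prime}{E}{^\bullet}\#\leftindex^{\prime}{F}{^\bullet}$, and $\leftindex^{\prime\prime}{E}{^\bullet}\#\leftindex^{\prime\prime}{F}{^\bullet}$.

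**Second step: identify $\Gamma_{\fraka}$ on each piece.** The point is that $\Gamma_{\fraka}(\E(R/P)\#\E(S/Q))$ equals $\E(R/P)\#\E(S/Q)$ when $I\subseteq P$ or $J\subseteq Q$ (it is then $\fraka$-torsion by Proposition~\ref{Segre preserve torsion and auto}(\ref{torsion imply vanishing})), and is $0$ when both $f\in I\setminus P$ and $g\in J\setminus Q$ exist, since the relevant multiplication is then an isomorphism (Proposition~\ref{Segre preserve torsion and auto}(\ref{iso imply vanishing})) so the module has no torsion. Hence $\Gamma_{\fraka}$ kills $\leftindex^{\prime\prime}{E}{^\bullet}\#\leftindex^{\prime\prime}{F}{^\bullet}$ entirely, and acts as the identity on the other three pieces: $\Gamma_{\fraka}(\leftindex^{\prime}{E}{^\bullet}\#\leftindex^{\prime}{F}{^\bullet})=\leftindex^{\prime}{E}{^\bullet}\#\leftindex^{\prime}{F}{^\bullet}$, $\Gamma_{\fraka}(\leftindex^{\prime}{E}{^\bullet}\#\leftindex^{\prime\prime}{F}{^\bullet})=\leftindex^{\prime}{E}{^\bullet}\#\leftindex^{\prime\prime}{F}{^\bullet}$, and $\Gamma_{\fraka}(\leftindex^{\prime\prime}{E}{^\bullet}\#\leftindex^{\prime}{F}{^\bullet})=\leftindex^{\prime\prime}{E}{^\bullet}\#\leftindex^{\prime}{F}{^\bullet}$ (here one must check, summand by summand, that for $\E(R/P)\#\E(S/Q)$ with, say, $I\subseteq P$ but $J\not\subseteq Q$, the module is still $\fraka$-torsion, which holds because $\E(R/P)$ being $I$-torsion forces $\E(R/P)\#\E(S/Q)$ to be $\fraka$-torsion). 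Therefore $\Gamma_{\fraka}(C^\bullet)\cong (\leftindex^{\prime}{E}{^\bullet}\#\leftindex^{\prime}{F}{^\bullet})\oplus(\leftindex^{\prime}{E}{^\bullet}\#\leftindex^{\prime\prime}{F}{^\bullet})\oplus(\leftindex^{\prime\prime}{E}{^\bullet}\#\leftindex^{\prime}{F}{^\bullet})$ as complexes.

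**Third step: compute cohomology via Lemma~\ref{cohomology of Segre prod of complex}.** Applying Lemma~\ref{cohomology of Segre prod of complex} to each of the three surviving Segr\'{e}-product complexes and summing gives, for every $k$,
\begin{align*}
H^k_{\fraka}(M\#N)\cong \ &\Big(\bigoplus_{i+j=k}H^i_I(M)\#H^j_J(N)\Big)\\
&\oplus\Big(\bigoplus_{i+j=k}H^i_I(M)\#H^j(\leftindex^{\prime\prime}{F}{^\bullet})\Big)\oplus\Big(\bigoplus_{i+j=k}H^i(\leftindex^{\prime\prime}{E}{^\bullet})\#H^j_J(N)\Big),
\end{align*}
where we write $H^i_I(M)$ for $H^i(\leftindex^{\prime}{E}{^\bullet})$ and $H^j_J(N)$ for $H^j(\leftindex^{\prime}{F}{^\bullet})$. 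Now substitute the identifications from Remark~\ref{decompose injective res}: $H^0(\leftindex^{\prime\prime}{E}{^\bullet})\cong M^{sat}_I$, $H^i(\leftindex^{\prime\prime}{E}{^\bullet})\cong H^{i+1}_I(M)$ for $i\geq 1$ (and similarly for $F$), together with the facts $H^0_I(M)\hookrightarrow M$ and $H^0(\leftindex^{\prime}{E}{^\bullet}\#\leftindex^{\prime\prime}{F}{^\bullet})=H^0_I(M)\#M^{sat}_J$, etc. For $k\geq 2$ the terms involving $H^0$ and $H^1$ of the primed complexes do not contribute in a way that disturbs the stated formula — one checks the reindexing $\bigoplus_{i+j=k}H^i_I(M)\#H^{j+1}_J(N)=\bigoplus_{i+j=k+1, j\geq 2}H^i_I(M)\#H^j_J(N)$ combines with the $H^i_I(M)\#H^j_J(N)$ sum and the two "saturation" summands to produce exactly \eqref{thm: iso}. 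For $k=0,1$ one unwinds the same decomposition more carefully: the degree-$0$ and degree-$1$ cohomology of $C^\bullet$ after applying $\Gamma_{\fraka}$, combined with the short exact sequence $0\to H^0_{\fraka}(M\#N)\to M\#N\to (M\#N)^{sat}_{\fraka}\to H^1_{\fraka}(M\#N)\to 0$ coming from Remark~\ref{decompose injective res} applied directly to $\fraka$ and $M\#N$, and the identification $(M\#N)^{sat}_{\fraka}\cong M^{sat}_I\#N^{sat}_J$ (which one proves using that $U\times V$ is the relevant open subset of $\Proj(T)$ and sections of a box product sheaf factor), yields \eqref{thm: exact sequence}.

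**Main obstacle.** I expect the genuinely delicate point to be the bookkeeping for small $k$: establishing \eqref{thm: exact sequence} requires matching the spectral-sequence/bicomplex output in degrees $0$ and $1$ with the intrinsic description of $H^0_{\fraka}$ and $H^1_{\fraka}$ of $M\#N$, and in particular proving the isomorphism $(M\#N)^{sat}_{\fraka}\cong M^{sat}_I\#N^{sat}_J$ of "global sections over $U\times V$" — this is a statement about cohomology of sheaves on a product of projective schemes and needs either a direct K\"{u}nneth argument for $H^0$ or a careful chase through the complexes $\leftindex^{\prime\prime}{E}{^\bullet},\leftindex^{\prime\prime}{F}{^\bullet}$ showing $H^0(\leftindex^{\prime\prime}{E}{^\bullet}\#\leftindex^{\prime\prime}{F}{^\bullet})\cong H^0(\leftindex^{\prime\prime}{E}{^\bullet})\#H^0(\leftindex^{\prime\prime}{F}{^\bullet})$. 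Everything else is a formal consequence of exactness of $\#$, the structure of graded injectives, and Lemma~\ref{cohomology of Segre prod of complex}.
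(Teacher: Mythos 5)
Your overall setup (take injective resolutions $E^{\bullet}$, $F^{\bullet}$, form $C^{\bullet}=E^{\bullet}\#F^{\bullet}$, use Theorem~\ref{Segre product produce a Gamma resolution}, and split the indecomposable injectives according to whether $I\subseteq P$, $J\subseteq Q$) is exactly the right starting point and matches the paper. However, there is a genuine gap in the Second and Third steps: the claim that
\[
\Gamma_{\fraka}(C^{\bullet})\;\cong\;(\leftindex^{\prime}{E}{^\bullet}\#\leftindex^{\prime}{F}{^\bullet})\oplus(\leftindex^{\prime}{E}{^\bullet}\#\leftindex^{\prime\prime}{F}{^\bullet})\oplus(\leftindex^{\prime\prime}{E}{^\bullet}\#\leftindex^{\prime}{F}{^\bullet})
\]
\emph{as complexes} is false. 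The decomposition $E^i\cong\leftindex^{\prime}{E}{^i}\oplus\leftindex^{\prime\prime}{E}{^i}$ is only a decomposition of modules, not of complexes: $\leftindex^{\prime}{E}{^\bullet}=\Gamma_I(E^{\bullet})$ is a subcomplex, but $\leftindex^{\prime\prime}{E}{^\bullet}=E^{\bullet}/\Gamma_I(E^{\bullet})$ is only a quotient complex, and the differential $d_E$ can carry $\leftindex^{\prime\prime}{E}{^i}$ into $\leftindex^{\prime}{E}{^{i+1}}$. Concretely, $d_C$ restricted to $\leftindex^{\prime\prime}{E}{^i}\#\leftindex^{\prime}{F}{^j}$ has a component landing in $\leftindex^{\prime}{E}{^{i+1}}\#\leftindex^{\prime}{F}{^j}$, so the three module-level summands of $\Gamma_{\fraka}(C^{\bullet})$ are \emph{not} preserved by the differential and you cannot apply Lemma~\ref{cohomology of Segre prod of complex} to each piece separately. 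Indeed, if you push your formula through, you get an extra summand $\bigoplus_{i+j=k}H^i_I(M)\#H^j_J(N)$ (plus double-counted terms) beyond what the theorem states, which signals that the purported direct-sum decomposition overcounts.

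The correct route (and the one the paper takes) applies the K\"{u}nneth lemma only to the \emph{quotient} complex. Set $\leftindex^{\prime}{C}{^\bullet}:=\Gamma_{\fraka}(C^{\bullet})$ and $\leftindex^{\prime\prime}{C}{^\bullet}:=C^{\bullet}/\leftindex^{\prime}{C}{^\bullet}$. The key observation is that $\leftindex^{\prime\prime}{C}{^\bullet}=\leftindex^{\prime\prime}{E}{^\bullet}\#\leftindex^{\prime\prime}{F}{^\bullet}$ \emph{is} an honest Segr\'{e} product of complexes (the quotient differentials on $\leftindex^{\prime\prime}{E}{^\bullet}$ and $\leftindex^{\prime\prime}{F}{^\bullet}$ assemble into the quotient differential on $\leftindex^{\prime\prime}{C}{^\bullet}$), so Lemma~\ref{cohomology of Segre prod of complex} legitimately computes its cohomology. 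One then runs the long exact sequence coming from
\[
0\to\leftindex^{\prime}{C}{^\bullet}\to C^{\bullet}\to\leftindex^{\prime\prime}{C}{^\bullet}\to 0,
\]
using that $C^{\bullet}$ is a resolution (so $H^k(C^{\bullet})=0$ for $k\geq 1$), to obtain the four-term exact sequence \eqref{thm: exact sequence} in degrees $0$ and $1$, and isomorphisms $H^k(\leftindex^{\prime}{C}{^\bullet})\cong H^{k-1}(\leftindex^{\prime\prime}{C}{^\bullet})$ for $k\geq 2$. Feeding the identifications $H^0(\leftindex^{\prime\prime}{E}{^\bullet})\cong M^{sat}_I$, $H^i(\leftindex^{\prime\prime}{E}{^\bullet})\cong H^{i+1}_I(M)$ for $i\geq 1$ (and similarly for $F$) into the Künneth computation of $H^{k-1}(\leftindex^{\prime\prime}{C}{^\bullet})$ then gives precisely \eqref{thm: iso}. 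This also neatly sidesteps what you flagged as the ``main obstacle'' in small degrees: the exact sequence \eqref{thm: exact sequence} and the identification $(M\#N)^{sat}_{\fraka}\cong M^{sat}_I\#N^{sat}_J$ (stated as a corollary after the theorem) come out \emph{for free} from this single short exact sequence of complexes, with no separate sheaf-theoretic K\"{u}nneth argument for $H^0$ needed.
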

\begin{proof}
Let $E^{\bullet}$ be an injective resolution of $M$ in $\Mod_R$ and $F^{\bullet}$ be an injective resolution of $N$ in $\Mod_S$. Set $C^{\bullet}:=E^{\bullet}\#F^{\bullet}$ as in Definition \ref{defn: segre of complexes}. It follows from Theorem \ref{Segre product produce a Gamma resolution} that
\[H^k_{\fraka}(M\#N)\cong H^k(\Gamma_{\fraka}(C^{\bullet})),\ \forall\ k\geq 0.\]
Set $\leftindex^{\prime}{E}{^\bullet}:=\Gamma_I(E^{\bullet})$ (which is a subcomplex of $E^{\bullet}$) and $\leftindex^{\prime\prime}{E}^{\bullet}:=E^{\bullet}/\Gamma_I(E^{\bullet})$. We define $\leftindex^{\prime}{F}{^\bullet}$ and $\leftindex^{\prime\prime}{F}{^\bullet}$ analogously. Following Remark \ref{decompose injective res}, we analyze modules in $\leftindex^{\prime}{E}{^\bullet}$ and $\leftindex^{\prime\prime}{E}{^\bullet}$ by decomposing each $E^i\cong \leftindex^{\prime}{E}{^i}\bigoplus \leftindex^{\prime\prime}{E}^{i}$ with respect to $I$; that is, $I$ is contained in every prime ideal appearing in $\leftindex^{\prime}{E}{^i}$ and not contained in any prime ideal appearing in $\leftindex^{\prime\prime}{E}^{i}$; likewise we decompose each $F^i\cong \leftindex^{\prime}{F}{^i}\bigoplus \leftindex^{\prime\prime}{F}^{i}$ with respect to $J$. Then the $i$-th terms in $\leftindex^{\prime}{E}^{\bullet}$, $\leftindex^{\prime\prime}{E}^{\bullet}$, $\leftindex^{\prime}{F}^{\bullet}$ and $\leftindex^{\prime\prime}{F}^{\bullet}$ are precisely $\leftindex^{\prime}{E}^{i}$, $\leftindex^{\prime\prime}{E}^{i}$, $\leftindex^{\prime}{F}^{i}$ and $\leftindex^{\prime\prime}{F}^{i}$, respectively. Set 
\[\leftindex^{\prime}{C}^{k}:=\bigoplus_{i+j=k}\Big( (\leftindex^{\prime}{E}^{i}\#\leftindex^{\prime}{F}^{j})\oplus(\leftindex^{\prime}{E}^{i}\# \leftindex^{\prime\prime}{F}^{j})\oplus(\leftindex^{\prime\prime}{E}^{i}\#\leftindex^{\prime}{F}^{j})\Big)\ {\rm and\ }\leftindex^{\prime\prime}{C}^{k}:=\bigoplus_{i+j=k}\Big(\leftindex^{\prime\prime}{E}^{i}\#\leftindex^{\prime\prime}{F}^{j}\Big).\]
Then one can check $\leftindex^{\prime}{C}^{\bullet}= \Gamma_{\fraka}(C^{\bullet})$ which is a subcomplex of $C^{\bullet}$; consequently we have an exact sequence of complexes

\[0\to \leftindex^{\prime}{C}{^\bullet}\to C^{\bullet} \to \leftindex^{\prime\prime}{C}{^\bullet}\to 0\]

which induces an exact sequence 
\begin{equation}
\label{exact sequence for C}
0\to H^0(\leftindex^{\prime}{C}{^\bullet})\to H^0(C^{\bullet}) \to H^0(\leftindex^{\prime\prime}{C}{^\bullet})\to H^1(\leftindex^{\prime}{C}{^\bullet})\to 0
\end{equation}
and isomorphisms
\begin{equation}
\label{iso for C}
H^k(\leftindex^{\prime}{C}{^\bullet})\cong H^{k-1}(\leftindex^{\prime\prime}{C}{^\bullet})\quad \forall k\geq 2
\end{equation}
since $H^k(C^{\bullet})=0$ for all $k\geq 1$.

It remains to treat cohomology groups of $\leftindex^{\prime\prime}{C}{^\bullet}$. Note that 
\[\leftindex^{\prime\prime}{C}{^\bullet}=\leftindex^{\prime\prime}{E}{^\bullet}\#\leftindex^{\prime\prime}{F}{^\bullet}.\]
It follows from Lemma \ref{cohomology of Segre prod of complex} that
\[H^{t}(\leftindex^{\prime\prime}{C}{^\bullet})\cong \bigoplus_{i+j=t}\Big( H^i(\leftindex^{\prime\prime}{E}{^\bullet})\# H^j(\leftindex^{\prime\prime}{F}{^\bullet})\Big)\quad \forall t\geq 0.\]
Now it follows from Remark \ref{decompose injective res} that 
\begin{align*}
H^0(\leftindex^{\prime\prime}{E}{^\bullet})\cong M^{sat}_I\quad &{\rm and}\quad H^i(\leftindex^{\prime\prime}{E}{^\bullet})\cong H^{i+1}_I(M)\ i\geq 1\\
H^0(\leftindex^{\prime\prime}{F}{^\bullet})\cong N^{sat}_J\quad &{\rm and}\quad H^j(\leftindex^{\prime\prime}{F}{^\bullet})\cong H^{j+1}_J(N)\ j\geq 1
\end{align*}
Consequently the exact sequence (\ref{thm: exact sequence}) follows from (\ref{exact sequence for C}) since $H^0(C{^\bullet})\cong M\#N$ and the isomorphisms (\ref{thm: iso}) follow from the isomorphisms (\ref{iso for C}). This completes the proof of our theorem.
\end{proof}

Specializing to the case $I=\frakm_R$ and $J=\frakm_S$ produces the following immediate corollary. 
\begin{corollary}
\label{kunneth for max ideals}
Given the notations as above, we have an exact sequence
\begin{equation}
\label{thm: exact sequence max ideals}
0\to H^0_{\frakm_T}(M\#N)\to M\#N\to M^{sat}\#N^{sat}\to H^1_{\frakm_T}(M\#N)\to 0
\end{equation}
and isomorphisms
\begin{equation}
\label{thm: iso max ideals}
H^k_{\frakm_T}(M\#N)\cong \Big( M^{sat}\#H^k_{\frakm_S}(N)\Big)\oplus \Big(H^k_{\frakm_R}(M)\#N^{sat}\Big)\oplus \Big(\bigoplus_{i+j=k+1}H^i_{\frakm_R}(M)\#H^j_{\frakm_S}(N)\Big)
\end{equation}
for all $k\geq 2$.
\end{corollary}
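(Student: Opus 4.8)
The plan is to specialize Theorem~\ref{Kunneth formula: two components} to the ideals $I=\frakm_R$ and $J=\frakm_S$ and then to translate each term of the resulting statement back into the notation fixed before that theorem; there are only three identifications to make, and none of them requires any real work. For the support ideal, recall from the Notation block preceding Theorem~\ref{Kunneth formula: two components} that $\frakm_T=\frakm_R\#\frakm_S$, so $\fraka=I\#J$ becomes $\frakm_T$ and every occurrence of $H^k_{\fraka}(M\#N)$ in (\ref{thm: exact sequence})--(\ref{thm: iso}) becomes $H^k_{\frakm_T}(M\#N)$.

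For the open sets, observe that with $I=\frakm_R$ one has $\Proj(R/\frakm_R)=\emptyset$: the only homogeneous prime of $R$ containing $\frakm_R$ is $\frakm_R$ itself, and $\frakm_R\notin\Proj(R)$ (equivalently, the irrelevant ideal of $R/\frakm_R\cong\kk$ is zero). Hence $U=\Proj(R)\setminus\Proj(R/\frakm_R)=\Proj(R)$, and likewise $V=\Proj(S)$. Combining this with Definition~\ref{defn: sat} gives
\[
M^{sat}_I=M^{sat}_{\frakm_R}=\bigoplus_{\ell\in\ZZ}H^0(U,\widetilde{M}(\ell))=\bigoplus_{\ell\in\ZZ}H^0(\Proj(R),\widetilde{M}(\ell))=M^{sat},
\]
and similarly $N^{sat}_J=N^{sat}_{\frakm_S}=N^{sat}$; this is exactly the last clause of Definition~\ref{defn: sat}.

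Substituting these three identifications into the exact sequence (\ref{thm: exact sequence}) of Theorem~\ref{Kunneth formula: two components} yields (\ref{thm: exact sequence max ideals}), and substituting them into the isomorphisms (\ref{thm: iso}) yields (\ref{thm: iso max ideals}). Since the corollary is a pure unwinding of the definitions, there is no genuine obstacle to anticipate; the only point one might pause over is the identification $\Proj(R/\frakm_R)=\emptyset$ — equivalently, that the $\frakm_R$-saturation appearing in Theorem~\ref{Kunneth formula: two components} coincides with the module $M^{sat}$ of Definition~\ref{defn: sat} — and that is immediate from the description of $\hSpec(R)$ and $\Proj(R)$.
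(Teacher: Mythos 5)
Your proof is correct and is essentially the same as the paper's, which presents the corollary as an immediate specialization of Theorem~\ref{Kunneth formula: two components} to $I=\frakm_R$, $J=\frakm_S$. You have merely made explicit the two bookkeeping facts the paper leaves implicit — that $\frakm_T=\frakm_R\#\frakm_S$ (from the Notation block) and that $U=\Proj(R)$ so $M^{sat}_{\frakm_R}=M^{sat}$ (the last clause of Definition~\ref{defn: sat}) — which is fine.
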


\begin{remark}
By a straightforward induction on the number of graded rings, one can deduce a K\"{u}nneth formula for local cohomology of the Segr\'{e} product of any finite number of graded rings.
\end{remark}

Our Theorem \ref{Kunneth formula: two components} also produces the following consequence on the behavior of saturations of modules under Segr\'{e} products.
\begin{corollary}
Given the notations as above,
\[(M\#N)^{sat}_{I\#J}\cong M^{sat}_I\#N^{sat}_J.\]
That is, if we denote by $W$ the complement of the closed subset defined by $I\#J$ in $\Proj(R\#S)$, then
\[\bigoplus_{\ell\in \ZZ}H^0(W,\widetilde{M\#N}(\ell))\cong (\bigoplus_{\ell\in \ZZ}H^0(U,\widetilde{M}(\ell)))\#(\bigoplus_{\ell\in \ZZ}H^0(V,\widetilde{N}(\ell)))\]
\end{corollary}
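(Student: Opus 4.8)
The plan is to identify $P:=M^{sat}_I\#N^{sat}_J$ directly with the $\fraka$-saturation of $M\#N$. Since $T=R\#S$ is again a standard graded ring (it is generated in degree $1$ over $\kk$ by the images of the $x_i\otimes y_j$), Remark \ref{decompose injective res} applies to $T$, to the ideal $\fraka=I\#J$, and to the module $M\#N$, producing an exact sequence
\[0\to H^0_{\fraka}(M\#N)\to M\#N\to (M\#N)^{sat}_{\fraka}\to H^1_{\fraka}(M\#N)\to 0,\]
in which $(M\#N)^{sat}_{\fraka}=\bigoplus_{\ell}H^0(W,\widetilde{M\#N}(\ell))$ for $W$ as in the statement. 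By exactness of this sequence, the canonical map $L\to L^{sat}_{\fraka}$ is an isomorphism for every $T$-module $L$ with $H^0_{\fraka}(L)=H^1_{\fraka}(L)=0$. So it suffices to show that $P$ satisfies these two vanishings and sits in the exact sequence (\ref{thm: exact sequence}) of Theorem \ref{Kunneth formula: two components}; the latter is precisely that theorem.

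To obtain $H^0_{\fraka}(P)=H^1_{\fraka}(P)=0$, I would first record that $I$-saturation is idempotent: the canonical map $M^{sat}_I\to(M^{sat}_I)^{sat}_I$ is an isomorphism, and similarly for $N^{sat}_J$. This is checked on associated sheaves: writing $j\colon U\hookrightarrow X:=\Proj(R)$ for the inclusion, one has $\widetilde{M^{sat}_I}\cong j_*(\widetilde M|_U)$ (using the standard isomorphism $\widetilde{\Gamma_*\mathcal G}\cong\mathcal G$ for quasi-coherent $\mathcal G$ on $X$), hence $\widetilde{M^{sat}_I}|_U\cong\widetilde M|_U$, and taking $\bigoplus_\ell H^0(U,-(\ell))$ gives $(M^{sat}_I)^{sat}_I\cong M^{sat}_I$. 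Now apply Theorem \ref{Kunneth formula: two components} with the $R$-module $M^{sat}_I$ and the $S$-module $N^{sat}_J$ in place of $M$ and $N$. As the proof of that theorem shows, the middle arrow of (\ref{thm: exact sequence}) is the Segr\'{e} product of the two canonical saturation maps; by idempotence it is therefore an isomorphism $P\cong(M^{sat}_I)^{sat}_I\#(N^{sat}_J)^{sat}_J$, and exactness of (\ref{thm: exact sequence}) forces $H^0_{\fraka}(P)=0$ and $H^1_{\fraka}(P)=0$.

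It remains to run the comparison. Applying (\ref{thm: exact sequence}) to $M$ and $N$ themselves gives a short exact sequence $0\to (M\#N)/H^0_{\fraka}(M\#N)\to P\to H^1_{\fraka}(M\#N)\to 0$ of $T$-modules. I would apply the exact functor $\widetilde{(-)}$ on $\Proj(T)$ and restrict to $W=\Proj(T)\setminus\Proj(T/\fraka)$: the sheaves attached to the $\fraka$-torsion modules $H^0_{\fraka}(M\#N)$ and $H^1_{\fraka}(M\#N)$ vanish on $W$, since an $\fraka$-torsion module has zero homogeneous localization at every homogeneous prime of $T$ not containing $\fraka$. This yields $\widetilde{M\#N}|_W\cong\widetilde P|_W$, and taking $\bigoplus_\ell H^0(W,-(\ell))$ gives
\[(M\#N)^{sat}_{\fraka}=\bigoplus_\ell H^0(W,\widetilde{M\#N}(\ell))\cong\bigoplus_\ell H^0(W,\widetilde P(\ell))=P^{sat}_{\fraka}\cong P,\]
where the final isomorphism applies the exact sequence of Remark \ref{decompose injective res} to $P$ together with the vanishings just established. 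This is the asserted isomorphism $(M\#N)^{sat}_{I\#J}\cong M^{sat}_I\#N^{sat}_J$, and unravelling the definitions of the three saturations yields the displayed sheaf-cohomological reformulation.

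I expect the only real friction to be bookkeeping: proving idempotence of $I$-saturation, and checking that the middle arrow of (\ref{thm: exact sequence}) is genuinely the Segr\'{e} product of the canonical saturation maps --- which means tracing through the construction $C^{\bullet}=E^{\bullet}\#F^{\bullet}$ and the projection $C^{\bullet}\to C^{\bullet}/\Gamma_{\fraka}(C^{\bullet})$ in the proof of Theorem \ref{Kunneth formula: two components}, and matching the induced map on $H^0$ with the saturation maps of $M$ and $N$. The remaining steps, in particular the vanishing of the sheaves of the torsion modules on $W$, are routine.
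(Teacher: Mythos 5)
Your proof is correct. Since the paper states this corollary without proof, the intended argument is presumably the more direct one already latent in the proof of Theorem~\ref{Kunneth formula: two components}: there, the middle term of (\ref{thm: exact sequence}) arose as $H^0(\leftindex^{\prime\prime}{C}{^\bullet})$ for $C^{\bullet}=E^{\bullet}\#F^{\bullet}$, and Lemma~\ref{cohomology of Segre prod of complex} identifies $H^0(\leftindex^{\prime\prime}{C}{^\bullet})\cong M^{sat}_I\#N^{sat}_J$; on the other hand, $\leftindex^{\prime\prime}{C}{^k}=C^k/\Gamma_{\fraka}(C^k)$ is exactly the $\fraka$-ideal transform of $C^k$ (each $C^k$ being $\Gamma_{\fraka}$-acyclic), and since $C^{\bullet}$ is a $\Gamma_{\fraka}$-acyclic --- hence also transform-acyclic --- resolution of $M\#N$ (Theorem~\ref{Segre product produce a Gamma resolution}), the zeroth cohomology of the transform of $C^{\bullet}$ computes the transform of $M\#N$, which is precisely $(M\#N)^{sat}_{\fraka}$. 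In other words, $H^0(\leftindex^{\prime\prime}{C}{^\bullet})$ is a resolution-independent invariant, and the two descriptions of it match.

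Your route is genuinely different and somewhat longer: you (i) establish idempotence of $I$-saturation via $\widetilde{M^{sat}_I}\cong j_*(\widetilde M|_U)$; (ii) feed the already-saturated modules $M^{sat}_I$, $N^{sat}_J$ back into Theorem~\ref{Kunneth formula: two components} to extract $H^0_{\fraka}(P)=H^1_{\fraka}(P)=0$ for $P:=M^{sat}_I\#N^{sat}_J$; and (iii) compare $\widetilde{M\#N}|_W$ with $\widetilde P|_W$ using that $\fraka$-torsion modules have vanishing sheaf on $W$, then conclude via $P^{sat}_{\fraka}\cong P$. Steps (i) and (iii) are correct and routine. The one point that still carries real content --- as you rightly flag --- is the identification in step (ii) of the middle arrow of (\ref{thm: exact sequence}) (for the saturated inputs) with $\phi_{M^{sat}_I}\#\phi_{N^{sat}_J}$: this requires the functoriality of Lemma~\ref{cohomology of Segre prod of complex} with respect to the termwise quotient maps $E^{\bullet}\to\leftindex^{\prime\prime}{E}{^\bullet}$, $F^{\bullet}\to\leftindex^{\prime\prime}{F}{^\bullet}$, which is the same species of bookkeeping that the direct argument also needs (there, to see that the comparison map between a $\Gamma_{\fraka}$-acyclic resolution and an injective resolution induces the identity on the middle terms of the resulting exact sequences). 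So the two proofs ultimately rest on similar naturality facts; your version trades the abstract resolution-independence for a concrete reduction to sheaves on $W$, which some readers may find more transparent.
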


\section{A sharp lower bound on depth}
\label{depth section}

In this section, we consider some applications of our K\"{u}nneth formula in Theorem \ref{Kunneth formula: two components}.

We begin with applications to depth. Recall that the depth\footnote{This is called the {\it grade} by some authors.} of a commutative ring $A$ in an ideal $\fraka$ is defined as
\[\Depth_{\fraka}(A):=\inf\{i\mid \Ext^i_A(A/\fraka,A)\neq 0\},\]
which is also the length of a maximal regular sequence in $\fraka$. When $A$ is a standard graded ring and $\frakm$ is the homogeneous maximal ideal, we set $\Depth(A):=\Depth_{\frakm}(A)$.

\begin{theorem}
\label{thm: lower bound depth}
Let $R,S$ be standard graded rings and $I,J$ be homogeneous ideals in $R,S$, respectively. Then
\[\Depth_{I\#J}(R\#S)\geq \min\{\Depth_I(R),\Depth_J(S)\}.\]
\end{theorem}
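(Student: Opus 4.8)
The plan is to prove the depth bound by relating depth to the vanishing of local cohomology and then invoking the Künneth formula of Theorem \ref{Kunneth formula: two components}. Recall that for a standard graded ring $A$ with homogeneous maximal ideal $\frakn$ and an ideal $\fraka$, one has $\Depth_{\fraka}(A)=\inf\{i\mid H^i_{\fraka}(A)\neq 0\}$ (this is standard; the $\Ext$ characterization given above is equivalent to it). So, writing $c:=\min\{\Depth_I(R),\Depth_J(S)\}$, it suffices to show that $H^k_{I\#J}(R\#S)=0$ for all $k<c$. We apply Theorem \ref{Kunneth formula: two components} with $M=R$ and $N=S$.

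First I would dispose of the small cases $k=0$ and $k=1$ using the four-term exact sequence \eqref{thm: exact sequence}. If $c\leq 1$ there is nothing to prove, so assume $c\geq 2$; then $\Depth_I(R)\geq 2$ forces $H^0_I(R)=H^1_I(R)=0$, which by Remark \ref{decompose injective res} (the EGA exact sequence) means exactly that the natural map $R\to R^{sat}_I$ is an isomorphism, and likewise $S\to S^{sat}_J$ is an isomorphism. By the exactness of the Segré product functors (Remark \ref{Segre is exact}), the induced map $R\#S\to R^{sat}_I\#S^{sat}_J$ is then an isomorphism, so \eqref{thm: exact sequence} forces $H^0_{I\#J}(R\#S)=H^1_{I\#J}(R\#S)=0$. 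For $2\leq k<c$, the isomorphism \eqref{thm: iso} reads
\[
H^k_{I\#J}(R\#S)\cong\bigl(R^{sat}_I\#H^k_J(S)\bigr)\oplus\bigl(H^k_I(R)\#S^{sat}_J\bigr)\oplus\Bigl(\bigoplus_{i+j=k+1}H^i_I(R)\#H^j_J(S)\Bigr).
\]
Since $k<c\leq\Depth_I(R)$ and $k<c\leq\Depth_J(S)$, we have $H^k_I(R)=0$ and $H^k_J(S)=0$, killing the first two summands. For the third, in any decomposition $i+j=k+1$ we cannot have both $i\geq c$ and $j\geq c$, because that would give $k+1=i+j\geq 2c\geq c+2>k+1$ (using $c\geq 2$); hence $i<c\leq\Depth_I(R)$ or $j<c\leq\Depth_J(S)$, so $H^i_I(R)=0$ or $H^j_J(S)=0$, and each term $H^i_I(R)\#H^j_J(S)$ vanishes (the Segré product of the zero module with anything is zero). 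Therefore $H^k_{I\#J}(R\#S)=0$ for all $k<c$, which gives $\Depth_{I\#J}(R\#S)\geq c$.

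The only genuine subtlety is the identification of $\Depth_{\fraka}(A)$ with the least nonvanishing index of $H^\bullet_{\fraka}(A)$, together with the identification $H^i_I(R)=0$ for $i<\Depth_I(R)$ — but both are classical facts about depth-sensitivity of local cohomology (see, e.g., \cite{BrodmannSharp}), so I expect no real obstacle; the heart of the argument is simply the index bookkeeping in \eqref{thm: iso} that I carried out above, namely that $i+j=k+1$ with $k+1<c+2$ prevents both $i$ and $j$ from reaching $c$.
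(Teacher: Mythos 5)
Your argument follows the same route as the paper's: reduce to showing $H^k_{I\#J}(R\#S)=0$ for $k$ below the claimed bound, handle $k=0,1$ via the four-term exact sequence \eqref{thm: exact sequence}, and handle $k\geq 2$ via the isomorphism \eqref{thm: iso} together with the index bookkeeping on $i+j=k+1$. That bookkeeping is correct.

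However, there is a genuine (if small) gap at the start: you assert that ``if $c\leq 1$ there is nothing to prove'' and then assume $c\geq 2$. This is false for $c=1$. When $c=1$ you must still show $H^0_{I\#J}(R\#S)=0$ in order to conclude $\Depth_{I\#J}(R\#S)\geq 1$, since grade can certainly equal $0$ (for instance $\Depth_{(x)}(\kk[x,y]/(xy))=0$). The fix is exactly in the spirit of what you do for $c\geq 2$, but you only get injectivity, not an isomorphism: $\Depth_I(R)\geq 1$ gives $H^0_I(R)=0$, so from the EGA sequence $0\to H^0_I(R)\to R\to R^{sat}_I\to H^1_I(R)\to 0$ the map $R\to R^{sat}_I$ is injective; likewise $S\to S^{sat}_J$ is injective. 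By Remark \ref{Segre is exact}(2) the map $R\#S\to R^{sat}_I\#S^{sat}_J$ is then injective, and \eqref{thm: exact sequence} yields $H^0_{I\#J}(R\#S)=0$. The paper treats $m=1$ separately precisely for this reason; once you insert this case, your proof agrees with the paper's.
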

\begin{proof}
Set $m=\min\{\Depth_I(R),\Depth_J(S)\}$. It suffices to show that $H^k_{I\#J}(R\#S)=0$ for all $k\leq m-1$.

When $m=0$, there is nothing to prove. When $m=1$, we have $H^0_I(R)=H^0_J(S)=0$. It follows frome Remark \ref{decompose injective res} that both $R\to R^{sat}_I$ and $S\to S^{sat}_J$ are injective. Hence Remark \ref{Segre is exact}(2) implies that $R\#S\to R^{sat}_I\#S^{sat}_J$ is also injective. Thus, by the exact sequence (\ref{thm: exact sequence}) in Theorem \ref{Kunneth formula: two components} that $H^0_{I\#J}(R\#S)=0$. This proves that case when $m=1$.

When $m=2$, we have $H^i_I(R)=H^i_J(S)=0$ for $i=0,1$. It follows that both $R\cong R^{sat}_I$ and $S\cong S^{sat}_J$. Hence $R\#S\cong R^{sat}_I\#S^{sat}_J$. Thus, by the exact sequence (\ref{thm: exact sequence}) in Theorem \ref{Kunneth formula: two components} that $H^0_{I\#J}(R\#S)=H^1_{I\#J}(R\#S)=0$. This proves that case when $m=2$.

Assume that $m\geq 3$ and $k\leq m-1$. It follows from the previous paragraph that $H^0_{I\#J}(R\#S)=H^1_{I\#J}(R\#S)=0$. Hence we may assume that $k\geq 2$. Since $k\leq m-1$, it is clear that $H^k_I(R)=H^k_J(S)=0$. Given any pair $(i,j)$ with $i+j=k+1$, one can check either $H^i_I(R)=0$ or $H^j_J(S)=0$. Therefore $H^k_{I\#J}(R\#S)=0$ by the isomorphisms (\ref{thm: iso}) in Theorem \ref{Kunneth formula: two components}. This completes the proof.
\end{proof}

As we will see, the lower bound in Theorem \ref{thm: lower bound depth} is optimal. In order to explain this, we need the following observation on local cohomology.

\begin{prop}
\label{elements in positive degrees}
Let $R$ be a standard graded ring of dimension $d>0$ and $I$ be a homogeneous ideal of height $h<d$. Then there exists an integer $\ell_0$ such that
\[H^h_I(R)_{\ell}\neq 0\quad \forall \ell\geq \ell_0.\] 
\end{prop}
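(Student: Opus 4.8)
The plan is to extract the nonvanishing from the graded structure of $H^h_I(R)$ as a coherent-cohomology object on $\Proj(R)$, together with finiteness of that cohomology. Set $X := \Proj(R)$, $Z := \Proj(R/I)$, and $U := X \setminus Z$. Since $h = \hgt(I) < d = \dim R$, the ideal $I$ is not $\frakm_R$-primary, so $Z$ is a nonempty proper closed subset of $X$ and $U \neq \varnothing$. By Remark~\ref{decompose injective res} there is an isomorphism $H^{h}_I(R) \cong \bigoplus_{\ell \in \ZZ} H^{h-1}(U, \mathcal{O}_X(\ell))$ in $\Mod_R$, provided $h \geq 2$; the cases $h = 0$ and $h = 1$ should be disposed of separately (when $h=0$, $H^0_I(R) = \Gamma_I(R)$, and one argues directly that a standard graded module with a nonzero graded component in some degree has nonzero components in all sufficiently large degrees after multiplying by $x_i$; when $h = 1$, use the four-term sequence of Remark~\ref{decompose injective res} instead of the displayed isomorphisms). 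So the crux is: show $H^{h-1}(U, \mathcal{O}_X(\ell)) \neq 0$ for all $\ell \gg 0$.

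First I would reduce to showing this cohomology group is nonzero for a single value of $\ell$, then bootstrap to all large $\ell$. The bootstrapping step uses that $R$ is standard graded: pick a general linear form, or more robustly, observe that the graded module $H^{h}_I(R)$ is, by Remark~\ref{structure of injectives}, built from injective hulls $\E(R/P)(\ell_\alpha)$ with $I \subseteq P$ and $P \neq \frakm_R$ (since $h < d$ forces the associated primes of $H^h_I(R)$ to have height $\geq h$ but the module is not $\frakm_R$-torsion in the relevant range); for such $P$ there is a homogeneous $x_i \notin P$, and multiplication by $x_i$ gives an isomorphism $\E(R/P)(-1) \xrightarrow{\sim} \E(R/P)$ in each such summand, hence an injection $H^h_I(R)_\ell \hookrightarrow H^h_I(R)_{\ell+1}$ after discarding the finitely many $\frakm_R$-primary summands. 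Thus once $H^h_I(R)_{\ell_0} \neq 0$, it is nonzero for all $\ell \geq \ell_0$. Alternatively, and perhaps more cleanly: the sheaf $\mathcal{H}$ on $X$ with $\bigoplus_\ell H^0(U,\mathcal{H}(\ell)) = \cdots$ — but the slicker route is to use that $\bigoplus_\ell H^{h-1}(U, \mathcal{O}_X(\ell))$ is a graded module over $R$ whose components in negative enough or positive enough degree are controlled by Serre vanishing on the complement.

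For the single-degree nonvanishing I would argue by the local-cohomology characterization of $\hgt I = h$: since $H^h_I(R) \neq 0$ (as $\hgt I = h \leq \cd_I(R)$, and in fact the first nonvanishing local cohomology sits in degree $\hgt I$ when, e.g., $R$ is Cohen–Macaulay, but in general $H^{\hgt I}_I(R) \neq 0$ holds by Grothendieck nonvanishing applied along a suitable chain — one must be a little careful here and may need to invoke that $H^c_I(R) \neq 0$ for $c = \hgt I$ whenever $R$ is, say, equidimensional and catenary, or reduce to that case). Granting $H^h_I(R) \neq 0$, it has some nonzero graded piece $H^h_I(R)_{\ell_1} \neq 0$; the multiplication-by-$x_i$ injectivity above (valid past the finitely many $\frakm_R$-primary summands, which live in a bounded range of degrees by graded Grothendieck finiteness, since $H^0_{\frakm_R}$ of a finitely generated graded module is finitely generated hence bounded) then propagates this upward, giving the desired $\ell_0$.

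The main obstacle I anticipate is justifying $H^{\hgt I}_I(R) \neq 0$ in full generality for an arbitrary standard graded ring $R$ (this can fail to be detected naively when $R$ is not Cohen–Macaulay or not equidimensional), and cleanly separating the $\frakm_R$-primary part of $H^h_I(R)$ — which is bounded above in degree — from the rest, which is where the $x_i$-multiplication trick gives injections $H^h_I(R)_\ell \hookrightarrow H^h_I(R)_{\ell+1}$. I would handle the first point by passing to $R/\frakp$ for a minimal prime $\frakp$ with $\dim R/\frakp = d$ and $\hgt((I+\frakp)/\frakp) = h$ — such a $\frakp$ exists because $\hgt I = h < d$ — using right-exactness of $H^h_I(-)$ on the surjection $R \twoheadrightarrow R/\frakp$ together with the fact that $\cd_I(R/\frakp) \leq d - 1$ forces the top local cohomology to vanish and pushes nonvanishing down to degree $h$ via the Hartshorne–Lichtenbaum-type argument; if that turns out delicate, an alternative is to invoke the long exact sequence relating $H^*_I$ and $H^*_{\frakm_R}$ restricted to $U$ and use Grothendieck nonvanishing for $H^{d-1}(X, \mathcal{O}_X(\ell))$ directly. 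I would present the argument so that the coherent-sheaf picture (Serre finiteness and Serre vanishing on the projective schemes $X$ and $Z$) carries the bootstrapping, and keep the single-degree input as a black-box consequence of $\hgt I = h$.
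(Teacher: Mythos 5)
The paper's proof is much shorter and takes a different route that avoids the problems in your argument. It picks a minimal prime $\frakp$ of $I$ with $\hgt(\frakp)=h$ (necessarily homogeneous), applies Grothendieck nonvanishing to the \emph{local} ring $R_\frakp$ (where $\sqrt{IR_\frakp}=\frakp R_\frakp$ and $\dim R_\frakp = h$) to conclude $H^h_I(R)_\frakp\neq 0$, observes that $\frakp$ is therefore a \emph{minimal} element of $\Supp(H^h_I(R))$ and hence an associated prime, and so obtains a graded embedding $R/\frakp(-\ell_0)\hookrightarrow H^h_I(R)$ for some $\ell_0$. Since $\dim R/\frakp = d-h>0$, the standard graded domain $R/\frakp$ has $(R/\frakp)_j\neq 0$ for all $j\geq 0$, which finishes the proof with no sheaf theory and no ``bootstrapping.''

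Your proposal has two genuine gaps. First, your bootstrapping step treats $H^h_I(R)$ as if it were a direct sum of shifted injective hulls $\E(R/P)(\ell_\alpha)$ and then multiplies by a suitable variable $x_i\notin P$ to get injections $H^h_I(R)_\ell\hookrightarrow H^h_I(R)_{\ell+1}$. But $H^h_I(R)$ is only a \emph{subquotient} of a complex of injectives, not itself injective, so Remark~\ref{structure of injectives} does not give you such a decomposition; moreover, even if it did, different summands would need different $x_i$, so a single multiplication map would not be injective on the whole degree-$\ell$ piece. Likewise the claim that the $\frakm_R$-torsion part of $H^h_I(R)$ sits in a bounded range ``by graded Grothendieck finiteness'' is unsupported, since $H^h_I(R)$ is typically not a finitely generated $R$-module. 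Second, your anxiety about whether $H^{\hgt I}_I(R)\neq 0$ in full generality is misplaced and leads you into vague territory (there is no ``right-exactness of $H^h_I(-)$'' in intermediate cohomological degree): localizing at a minimal prime $\frakp$ of $I$ of height $h$ and invoking Grothendieck nonvanishing over the local ring $R_\frakp$ already gives $H^h_I(R)\neq 0$, with no Cohen--Macaulay or equidimensionality hypotheses. The real content of the proposition is not the mere nonvanishing of $H^h_I(R)$ but the existence of a minimal element of its support, and hence a graded prime cyclic submodule, which your sketch does not exploit.
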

\begin{proof}
Let $\frakp$ be a minimal prime of $I$ of height $h$; note that $\frakp$ is necessarily homogeneous. Since $\sqrt{IR_\frakp}=\frakp R_\frakp$ and $h=\dim(R_\frakp)$, by the Grothendieck nonvanishing theorem (\cite[6.1.4]{BrodmannSharp}), $H^h_I(R)_\frakp=H^h_{IR_\frakp}(R_{\frakp})\neq 0$. It follows that $\frakp$ is in the support of $H^h_I(R)$. Since $\hgt(\frakp)=\hgt(I)$, it follows that $\frakp$ is a minimal element in the support of $H^h_I(R)$; hence $\frakp$ is an associated prime of $H^h_I(R)$. Therefore, there is a homogeneous element $z\in H^h_I(R)$ such that the $R$-module homomorphism $R/\frakp\xrightarrow{1\mapsto z} H^h_I(R)$ is injective. Set $\ell_0:=\deg(z)$. By the assumptions, $\dim(R/\frakp)>0$ and consequently $H^h_I(R)_{\ell}\neq 0$ for all $\ell\geq \ell_0$. 
\end{proof}

\begin{theorem}
\label{depth Segre}
Let $R,S$ be standard graded rings and $I,J$ be positive dimensional homogeneous ideals in $R,S$, respectively. Assume that $R$ and $S$ are Cohen-Macaulay. Then
\[\Depth_{I\#J}(R\#S)=\min\{\Depth_I(R),\Depth_J(S)\}.\]
\end{theorem}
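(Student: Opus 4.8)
The lower bound $\Depth_{I\#J}(R\#S)\geq m$, where $m:=\min\{\Depth_I(R),\Depth_J(S)\}$, is already Theorem~\ref{thm: lower bound depth}, so the plan is to establish the reverse inequality $\Depth_{I\#J}(R\#S)\leq m$; equivalently, I would show that $H^m_{I\#J}(R\#S)\neq 0$. By the symmetry of the two factors I would assume $m=\Depth_I(R)$.

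The crux is a single nonvanishing statement, namely $H^m_I(R)\#S\neq 0$. To prove it I would first use that $R$ is Cohen--Macaulay, so $\Depth_I(R)=\hgt I$, and that $I$ is positive dimensional, so $\hgt I<\dim R$; Proposition~\ref{elements in positive degrees} then furnishes an integer $\ell_0$ with $H^m_I(R)_\ell\neq 0$ for all $\ell\geq\ell_0$. Since $J$ is positive dimensional we also have $\dim S>0$, hence $S_\ell\neq 0$ for every $\ell\geq 0$ (a minimal prime $\frakp$ of $S$ with $\dim(S/\frakp)=\dim S>0$ gives a standard graded domain $S/\frakp$ which is nonzero in all nonnegative degrees, and $S_\ell$ surjects onto $(S/\frakp)_\ell$). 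As the $\kk$-tensor product of nonzero vector spaces is nonzero, $(H^m_I(R)\#S)_\ell=H^m_I(R)_\ell\otimes_\kk S_\ell\neq 0$ for all $\ell\geq\max\{\ell_0,0\}$, so $H^m_I(R)\#S\neq 0$. I expect this degreewise bookkeeping to be the main obstacle, since the Segr\'{e} product of two nonzero graded modules need not be nonzero; the positive-dimensionality hypotheses, together with Proposition~\ref{elements in positive degrees} accessed through the Cohen--Macaulay identity $\Depth_I(R)=\hgt I$, are exactly what prevent this degeneration.

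Finally I would feed this into Theorem~\ref{Kunneth formula: two components}, distinguishing three shapes according to $m$. If $m\geq 2$: since $\Depth_J(S)\geq m\geq 1$ we have $H^0_J(S)=0$, so $S\hookrightarrow S^{sat}_J$, and the exactness of $H^m_I(R)\#-$ (Remark~\ref{Segre is exact}) gives $H^m_I(R)\#S\hookrightarrow H^m_I(R)\#S^{sat}_J$; the latter is a direct summand of $H^m_{I\#J}(R\#S)$ by the isomorphism~\eqref{thm: iso}, so $H^m_{I\#J}(R\#S)\neq 0$. If $m=1$: then $H^0_I(R)=H^0_J(S)=0$, so $R\hookrightarrow R^{sat}_I$ and $S\hookrightarrow S^{sat}_J$, hence $R\#S\hookrightarrow R^{sat}_I\#S^{sat}_J$, and the exact sequence~\eqref{thm: exact sequence} identifies $H^1_{I\#J}(R\#S)$ with $(R^{sat}_I\#S^{sat}_J)/(R\#S)$; this surjects onto $(R^{sat}_I\#S^{sat}_J)/(R\#S^{sat}_J)\cong (R^{sat}_I/R)\#S^{sat}_J\cong H^1_I(R)\#S^{sat}_J$, which contains $H^1_I(R)\#S\neq 0$. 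If $m=0$: then $H^0_I(R)=\Gamma_I(R)\neq 0$ is $I$-torsion, so $\Gamma_I(R)\#S$ is a nonzero submodule of $R\#S$ that is $I\#J$-torsion by Proposition~\ref{Segre preserve torsion and auto}(\ref{torsion imply vanishing}), whence $H^0_{I\#J}(R\#S)\supseteq\Gamma_I(R)\#S\neq 0$. In each case $H^m_{I\#J}(R\#S)\neq 0$, which combined with Theorem~\ref{thm: lower bound depth} gives the asserted equality.
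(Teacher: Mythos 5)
Your proof is correct and follows essentially the same route as the paper's: the lower bound from Theorem~\ref{thm: lower bound depth}, Proposition~\ref{elements in positive degrees} supplying the key nonvanishing, and the same three-way split on $m$ fed into Theorem~\ref{Kunneth formula: two components}. The only differences are cosmetic---using the injection $S\hookrightarrow S^{sat}_J$ rather than the isomorphism $S\cong S^{sat}_J$ when $m\geq 2$, a quotient-module phrasing for $m=1$, and isolating $H^m_I(R)\#S\neq 0$ as a named claim---none of which changes the substance of the argument.
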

\begin{proof}
Since $R$ and $S$ are Cohen-Macaulay, $\Depth_I(R)=\hgt_R(I)$ and $\Depth_J(S)=\hgt_S(J)$. 

Set $m=\min\{\Depth_I(R),\Depth_J(S)\}$, $T:=R\#S$, and $\fraka:=I\#J$. Without loss of generality, we assume that $m=\Depth_I(R)=\hgt_R(I)$. It follows from Proposition \ref{elements in positive degrees} that $H^m_I(R)_j\neq 0$ for $j\gg 0$.

By Theorem \ref{thm: lower bound depth}, it suffices to show that $H^m_{\fraka}(T)\neq 0$.

First, we assume that $m\geq 2$; consequently $R\cong R^{sat}_I$ and $S\cong S^{sat}_J$. In this case, it follows from Theorem \ref{Kunneth formula: two components} that
\[H^m_{\fraka}(T)\cong \Big( R\#H^m_J(S)\Big)\oplus \Big(H^m_I(R)\# S\Big)\oplus \Big(\bigoplus_{i+j=m+1}H^i_I(R)\#H^j_J(S)\Big)
\]
It follows from Proposition \ref{elements in positive degrees} that the direct summand $H^m_I(R)\# S\neq 0$. Therefore, $H^m_{\fraka}(T)\neq 0$ and hence $\Depth_{\fraka}(T)= m$. This proves the case when $m\geq 2$.

Next we assume that $m=1$. In this case, $H^1_I(R)_j\neq 0$ for $j\gg 0$ by Proposition \ref{elements in positive degrees} and there is a short exact sequence $0\to R\to R^{sat}_I\to H^1_I(R)\to 0$ in $\Mod_R$. By Remark \ref{Segre is exact}, there is an exact sequence $0\to R\#S^{sat}_J\to R^{sat}_I\#S^{sat}_J\to H^1_I(R)\#S^{sat}_J\to 0$. Consequently, the map $R\#S^{sat}_J\to R^{sat}_I\#S^{sat}_J$ is not surjective as $H^1_I(R)\#S^{sat}_J\neq 0$. It follows that $R\#S\to R^{sat}_I\#S^{sat}_J$ is not surjective. Therefore $H^1_{\fraka}(T)\neq 0$. This proves that case when $m=1$.

Finally, we assume $m=0$; that is, $\Depth_I(R)=0$. Then, $H^0_I(R)\neq 0$. Pick a nonzero homogeneous $f\in R$ such that $f$ is $I$-power torsion. Then for every nonzero $g\in S$ such that $\deg(f)=\deg(g)$, the element $f\#g$ is $I\#J$-power torsion. This shows that $H^0_{\fraka}(T)\neq 0$. Hence $\Depth_{\fraka}(T)=0=m$. This proves the case when $m=0$ and hence the proof of our theorem.
\end{proof}

\section{Results on graded $\calD$-modules with an application to cohomological dimension}
\label{graded D-module and cd section}
Let $R=\kk[x_1,\dots,x_d]$ be a polynomial ring over a field $\kk$ and let $\frakm$ denote the maximal irrelevant ideal $(x_1,\dots,x_d)$. Let $\mathcal{D}(R;\kk)$ denote the ring of $\kk$-linear differential operators on $R$. Then $\mathcal{D}(R;\kk)=R\langle \partial_i^{[t]}\mid t\in \mathbb{N}, 1\leq i\leq d\rangle$ where $\partial_i^{[t]}:=\frac{1}{t!}\frac{\partial^t}{\partial x^t_i}$. The (noncommutative) ring $\mathcal{D}(R;\kk)$ admits a natural grading: $\deg(x_i)=1$ and $\deg(\partial^{[t]}_i)=-t$ for all $x_i$ and all $t\geq 1$. When $R$ and $\kk$ are clear from the context, we will denote $\mathcal{D}(R;\kk)$ be $\scr{D}$. By a graded $\calD$-module we mean a $\ZZ$-graded left $\calD$-module; primary examples of graded $\calD$-modules are local cohomology modules $H^j_I(R)$ for homogeneous ideals $I$ of $R$.

\begin{remark}
The usual Leibniz rule can be extended to $\partial_i^{[t]}$ as follows ({\it cf.} \cite[Corollary~2.2]{LyubeznikBasicResult}):
\begin{equation}
\label{general Leibniz}
\partial_i^{[t]}x^s_i=\sum_{j=0}^tx^{s-j}_i\partial_i^{[t-j]}
\end{equation}
\end{remark}

\begin{lemma}
\label{lem: derivative inseparable}
Let $R=\kk[x_1,\dots,x_d]$ be a polynomial ring over a field $\kk$. Assume that $\Char(\kk)=p>0$. If $f,g\in R$ satisfy $f(x_1,\dots,x_d)=g(x^{p^e}_1,\dots,x^{p^e}_d)$ for an integer $e\geq 0$, then 
\[(\partial_i^{[p^e]}f)(x_1,\dots,x_d)=(\frac{\partial g}{\partial x_i})(x^{p^e}_1,\dots,x^{p^e}_d)\] 
for each $x_i$.
\end{lemma}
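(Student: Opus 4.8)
The plan is to reduce the stated identity, by $\kk$-linearity, to a computation on a single monomial, and then to an elementary congruence for binomial coefficients. Both $\partial_i^{[p^e]}$ and $\partial/\partial x_i$ are $\kk$-linear operators on $R$ that act only on the variable $x_i$, and on powers of $x_i$ one has $\partial_i^{[p^e]}(x_i^s)=\binom{s}{p^e}\,x_i^{s-p^e}$ (the binomial coefficient read modulo $p$, the right-hand side being $0$ when $s<p^e$). Writing $g=\sum_\alpha c_\alpha x^\alpha$ as a $\kk$-combination of monomials forces $f=\sum_\alpha c_\alpha x^{p^e\alpha}$, so by linearity it suffices to treat $g=x_1^{a_1}\cdots x_d^{a_d}$, in which case $f=x_1^{p^e a_1}\cdots x_d^{p^e a_d}$.

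For such $f$, applying $\partial_i^{[p^e]}$ to the $x_i$-factor alone yields
\[
\bigl(\partial_i^{[p^e]}f\bigr)(x_1,\dots,x_d)=\binom{p^e a_i}{p^e}\, x_i^{p^e a_i-p^e}\prod_{k\neq i}x_k^{p^e a_k},
\]
whereas $\dfrac{\partial g}{\partial x_i}=a_i\, x_i^{a_i-1}\prod_{k\neq i}x_k^{a_k}$, so that
\[
\Bigl(\tfrac{\partial g}{\partial x_i}\Bigr)(x_1^{p^e},\dots,x_d^{p^e})=a_i\, x_i^{p^e a_i-p^e}\prod_{k\neq i}x_k^{p^e a_k}.
\]
(When $a_i=0$ both sides vanish, consistently with $\binom{0}{p^e}=0$ for $p^e\geq 1$.) Comparing the two expressions, the lemma comes down to the single congruence $\binom{p^e a_i}{p^e}\equiv a_i\pmod p$.

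That congruence follows from the Frobenius identity $(1+t)^p\equiv 1+t^p$ in $\mathbb{F}_p[t]$ applied $e$ times: $(1+t)^{p^e a_i}=\bigl((1+t)^{p^e}\bigr)^{a_i}\equiv(1+t^{p^e})^{a_i}=\sum_k\binom{a_i}{k}t^{p^e k}$, and reading off the coefficient of $t^{p^e}$ on both sides gives $\binom{p^e a_i}{p^e}\equiv\binom{a_i}{1}=a_i$. (Equivalently, this is the instance of Lucas' theorem coming from the base-$p$ expansions of $p^e a_i$ and $p^e$, the former having $e$ trailing zero digits and the latter a single digit $1$ in place $e$.) This evaluation of $\binom{p^e a_i}{p^e}$ modulo $p$ is the only step carrying any genuine content, so I do not anticipate a real obstacle; everything else is bookkeeping.

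Alternatively one can phrase the proof structurally: on the polynomial subring $R^{(e)}:=\kk[x_1^{p^e},\dots,x_d^{p^e}]$ the operator $\partial_i^{[p^e]}$ restricts to a $\kk$-linear derivation sending $x_j^{p^e}\mapsto\delta_{ij}$, and hence agrees with $\partial/\partial(x_i^{p^e})$ there; the derivation property on $R^{(e)}$ comes from the divided-power Leibniz rule together with the vanishing $\partial_i^{[a]}(R^{(e)})=0$ for $0<a<p^e$, which again rests on the same binomial divisibility fact. I would present the monomial computation as the actual proof and, if it proves useful downstream, record this reformulation as a remark.
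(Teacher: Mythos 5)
Your proof is correct and follows essentially the same route as the paper's: reduce by linearity to a monomial, compute $\partial_i^{[p^e]}$ on a power of $x_i$, and conclude via the congruence $\binom{p^e a}{p^e}\equiv a\pmod p$. The only cosmetic difference is that you derive that congruence directly from the Frobenius identity whereas the paper simply cites Lucas' theorem.
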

\begin{proof}
Since $\partial_i^{[p^e]}$ is additive, we reduced to the case when $f$ is a monomial. As $\partial_i^{[p^e]}$ commutes with $x_j$ ($j\neq i$), we are reduced to the case when $f=x^{ap^e}_i$; hence $g=x^a_i$. It follows from (\ref{general Leibniz}) that
\[\partial_i^{[p^e]}x^{ap^e}_i=\binom{ap^e}{p^e}x^{(a-1)p^e}_i.\]
By Lucas' Theorem $\binom{ap^e}{p^e}\equiv a$ (mod $p$). Our results follows.
\end{proof}

The following is the main technical result of this section.
\begin{theorem}
\label{thm: nonvanishing negative degree}
Let $R=\kk[x_1,\dots,x_d]$ be a polynomial ring over a field $\kk$  and let $M$ be a nonzero graded $\calD$-module.
\begin{enumerate}
\item 
\label{max ideal being associated}
If $\frakm$ is an associated prime of $M$, then there is an integer $\ell$ such that $M_{\ell'}\neq 0$ for all $\ell'\leq \ell$.
\item 
\label{zero in one degree above}
If there is an integer $\ell$ such that $M_{\ell}\neq 0$ and $M_{\ell+1}=0$, then $M_{\ell'}\neq 0$ for all $\ell'\leq \ell$.
\item 
\label{never zero}
If $\frakm$ is not an associated prime of $M$ and each element of $M$ is annihilated by a nonzero polynomial, then \[M_{\ell}\neq 0,\quad \forall \ell\in \ZZ.\]
\end{enumerate}
\end{theorem}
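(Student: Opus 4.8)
The plan is to prove all three statements by exploiting the $\scr{D}$-module structure, specifically the action of the operators $\partial_i^{[t]}$ which have degree $-t$. The key mechanism is that if $0 \neq z \in M_\ell$, then applying suitable differential operators produces nonzero elements in lower degrees, while the hypotheses (either $\frakm$ being associated, or the existence of a "gap" $M_{\ell+1}=0$, or torsion plus $\frakm$ not associated) prevent the descent from stopping or wrapping around. I expect parts (1) and (2) to be the technical core, with (3) following by combining them with a case analysis on the characteristic.

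\textbf{Part (2): the gap propagates downward.} Suppose $M_\ell \neq 0$ but $M_{\ell+1}=0$; I want $M_{\ell'} \neq 0$ for all $\ell' \le \ell$. The natural approach is a contrapositive/descent argument: it suffices to show that if $M_\ell \neq 0$ and $M_{\ell+1} = 0$ then $M_{\ell-1} \neq 0$ (and $M_\ell = 0$ is impossible given $M_{\ell-1}\neq 0$... actually one shows $M_{\ell-1}\neq 0$ directly and that the gap condition is inherited). Take $0 \neq z \in M_\ell$. Since $M_{\ell+1} = 0$, we have $x_i z = 0$ for all $i$, i.e. $z \in \Gamma_\frakm(M)$ — in fact $\frakm z = 0$. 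Now I use the Leibniz rule \eqref{general Leibniz}: the issue is that $\partial_i^{[t]} z \in M_{\ell - t}$, but a priori these could all vanish. Here is where one must be careful about characteristic. In characteristic $0$, if all $\partial_i z = 0$ then $z$ is killed by all of $\scr{D}_{<0}$ together with $\frakm$, which would force (after checking the $\scr{D}$-submodule generated by $z$ is $1$-dimensional, supported at $\frakm$, and nonzero only in degree $\ell$) a contradiction with... hmm, actually this needs the standard fact that a nonzero graded $\scr{D}$-module cannot be concentrated in finitely many degrees unless it involves $E = H^d_\frakm(R)$, whose socle sits in degree $-d$. I think the cleaner route, used in the paper, is: the $\scr{D}$-submodule $N := \scr{D} z$ is nonzero, $\frakm$-torsion (since $z$ is and $\Gamma_\frakm$ is a $\scr{D}$-submodule functor), hence a direct sum of copies of $E(-a)$'s; any such has nonzero components in all degrees $\le \ell$ once it is nonzero in degree $\ell$, because $E_{-d-k}\neq 0$ for all $k \ge 0$. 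So the real content of (2) is: $\scr{D}z$ is $\frakm$-torsion, and an $\frakm$-torsion graded $\scr{D}$-module nonzero in degree $\ell$ is nonzero in all lower degrees. The first claim follows since $\frakm z=0 \Rightarrow z\in\Gamma_\frakm(M)$ and $\Gamma_\frakm(M)$ is a $\scr{D}$-submodule.

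\textbf{Part (1): $\frakm$ associated forces a downward ray.} If $\frakm \in \mathrm{Ass}(M)$, there is a homogeneous $z$ with $R/\frakm \hookrightarrow M$, $1 \mapsto z$, so $\frakm z = 0$ with $z$ in some degree $\ell$. Then $z \in \Gamma_\frakm(M)$, and exactly as above $\scr{D}z$ is a nonzero $\frakm$-torsion graded $\scr{D}$-module nonzero in degree $\ell$, hence nonzero in all degrees $\le \ell$, giving $M_{\ell'}\neq 0$ for $\ell' \le \ell$. So (1) and (2) share the same engine; the only work specific to each is producing the socle element.

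\textbf{Part (3) and the main obstacle.} Assume $\frakm \notin \mathrm{Ass}(M)$ and $M$ is torsion (every element killed by a nonzero polynomial). I want $M_\ell \neq 0$ for every $\ell \in \ZZ$. Since $M \neq 0$, fix $0 \neq z \in M_\ell$ for some $\ell$; its annihilator is a nonzero homogeneous ideal contained in a homogeneous prime $\frakp \neq \frakm$, so $\dim(\Supp M)>0$ near... more carefully, $z$ generates a $\scr{D}$-submodule whose support is a nonempty $\scr{D}$-stable, hence $\GL$-stable (monomial-coordinate-invariant) closed set, not all of $\Spec R$ being forced, but certainly nonzero and, since $\frakm\notin\mathrm{Ass}$, of positive dimension. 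By part (2) (contrapositive), there is \emph{no} degree $\ell_0$ with $M_{\ell_0}\neq 0$ and $M_{\ell_0+1}=0$: if there were, $M$ would be $\frakm$-torsion below $\ell_0$, but then it would have an associated prime $\frakm$ or be zero — one shows a nonzero $\frakm$-torsion module has $\frakm$ associated, contradicting the hypothesis; so in fact $M$ has no $\frakm$-torsion at all, i.e. $\Gamma_\frakm(M)=0$, equivalently multiplication by $\frakm$ is ``injective-ish''. Thus once $M_\ell \neq 0$, we get $M_{\ell+1}\neq 0$ (no gap upward), hence $M_{\ell'}\neq 0$ for all $\ell' \ge \ell$. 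For $\ell' < \ell$: use the operators $\partial_i^{[t]}$ to push down, and here the torsion hypothesis is essential. The hard part — and I expect this to be the main obstacle — is showing the descent by $\partial_i^{[t]}$ never stalls and never skips a degree when $\frakm$ is not associated. The idea is: given $0\neq z\in M_\ell$, since $M$ is $R$-torsion, $\Supp(\scr{D}z)$ is positive-dimensional, so some variable $x_i$ is a nonzerodivisor on a suitable localization, and one produces via the generalized Leibniz rule \eqref{general Leibniz} (in char $p$, invoking Lemma~\ref{lem: derivative inseparable} to handle $p$-th power obstructions) a nonzero element of $M_{\ell-1}$ — iterating gives all lower degrees. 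Making this rigorous in mixed/positive characteristic, where $\partial_i^{[1]}$ can kill $p$-th powers, is where Lemma~\ref{lem: derivative inseparable} and the factorization $f(x)=g(x^{p^e})$ trick enter; the Lucas-theorem computation ensures that after passing to the right Frobenius power the first-order derivative is genuinely nonzero, so the descent resumes. Combining the upward non-gap, the downward descent, and part (1)'s exclusion of $\frakm$ from the picture yields $M_\ell\neq 0$ for all $\ell\in\ZZ$; the final ``in particular'' assertion in Theorem~\ref{intro thm: nonzero in negative degree} then follows by merging the dimension-zero case (part (1) of the main theorem, giving the ray $\ell\le -d$) with part (3).
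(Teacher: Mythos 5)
Your handling of parts~(\ref{max ideal being associated}) and~(\ref{zero in one degree above}) takes a genuinely different and heavier route than the paper: you route through Lyubeznik's structure theorem for $\frakm$-torsion $\scr{D}$-modules (that such a module is a direct sum of shifted copies of $\E(R/\frakm)$), whereas the paper's argument is completely elementary. The paper simply notes that $\frakm z=0$ gives a $\scr{D}$-linear map $\scr{D}/\frakm\scr{D}\to M$, $1\mapsto z$; since $\scr{D}/\frakm\scr{D}\cong\kk[\partial_i^{[t]}]$ is a simple $\scr{D}$-module (it is $\E(R/\frakm)$ up to shift), the map is injective, and $\kk[\partial_i^{[t]}]$ is nonzero in every degree $\leq 0$, giving the downward ray immediately. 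Your version works once the (graded) structure theorem is in place, but it is more machinery than is needed; the paper's observation that $\frakm\scr{D}$ is a maximal left ideal does the whole job.

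The real problem is part~(\ref{never zero}). You correctly identify the upward direction (``no gap,'' because $\frakm$ not associated rules out $M_\ell\neq 0$, $M_{\ell+1}=0$) and you correctly name the tools for the downward direction --- the Leibniz identity~\eqref{general Leibniz}, Lemma~\ref{lem: derivative inseparable}, and the $f(x)=g(x^{p^e})$ trick --- but your proposed mechanism does not close the argument. You suggest a \emph{direct descent}: from $0\neq z\in M_\ell$, produce a nonzero element of $M_{\ell-1}$ and iterate. In characteristic $p$ this stalls: the Frobenius-factorization iteration needs \emph{all} of $\partial_i^{[p^e]}z$ ($e\geq 0$) to vanish in order to conclude $\partial_i^{[p^e]}f=0$ from $(\partial_i^{[p^e]}f)z=0$, and for a generic nonzero degree $\ell$ there is no reason those higher-order derivatives of $z$ are zero. (Your localization remark --- that some $x_i$ is a nonzerodivisor on a localization of $\Supp(\scr{D}z)$ --- does not produce an element of $M_{\ell-1}$ either.) The key idea you are missing is the paper's \emph{reduction to the minimal nonzero degree}: argue by contradiction, assume $M_\ell=0$ for some $\ell$, use the ``no gap'' observation in reverse to deduce $M_{\ell'}=0$ for all $\ell'\leq\ell$, and hence obtain a \emph{least} $\ell_0$ with $M_{\ell_0}\neq 0$. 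For $z\in M_{\ell_0}$ one then has $\partial_i^{[t]}z=0$ automatically for every $t\geq 1$ (these land in vanishing degrees), so the commutator computation gives $(\partial_i^{[t]}f)z=0$ for \emph{all} $t$, and the minimality of $\deg f$ forces $\partial_i f=0$ (char $0$: done), respectively $\partial_i^{[p^e]}f=0$ for all $e$, whence $\deg f\geq p^e$ for all $e$ (char $p$: done). Without passing to this minimal degree, the char-$p$ iteration has nowhere to start.
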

\begin{proof}
\begin{enumerate}
\item Since $\frakm$ is an associated prime, there is a nonzero homogeneous element $z\in M$ such that $\frakm z=0$. Set $\ell=\deg(z)$. Since $M$ is a $\calD$-module, there is a $\calD$-linear map
\[\calD/\frakm \calD\xrightarrow{1\mapsto z}M.\]
As $\frakm \calD$ is a maximal ideal in $\calD$, this maps must be injective. Since 
\[\calD/\frakm \calD\cong \kk[\partial_i^{[t]}\mid t\in \mathbb{N}, 1\leq i\leq d],\] 
our result follows since $\deg(\partial_i^{[t]})=-t$.
\item Since $M_{\ell}\neq 0$, there is a nonzero element $z\in M_{\ell}$. Since $M_{\ell+1}=0$, it follows that $\frakm =0$. Hence the result follows from (\ref{max ideal being associated}).

\item Assume otherwise; that is, there is an integer $\ell$ such that $M_{\ell}=0$. If $M_{\ell-1}\neq 0$, then each nonzero element in $M_{\ell-1}$ is annihilated by $\frakm$, a contradiction to the assumption that $\frakm$ is not an associated prime of $M$. Consequently, $M_{\ell'}=0$ for all $\ell'\leq \ell$. Since $M\neq 0$, there exists an integer $\ell_0$ such that $M_{\ell_0}\neq 0$ and $M_{\ell'}=0$ for all $\ell'<\ell_0$. 

Let $z\in M_{\ell_0}$ be a nonzero element. By our assumption, there is a nonzero polynomial $f\in R$ such that $fz=0$. We may assume that $f$ is homogeneous of the least degree among all polynomials that annihilate $z$. Note that $\deg(f)>0$ since $z\neq 0$. For each variable $x_i$, we have
\[\frac{\partial f}{\partial x_i}z=[\frac{\partial }{\partial x_i},f]z=\frac{\partial }{\partial x_i}(fz)-f(\frac{\partial }{\partial x_i}z)=0\]
where $\frac{\partial }{\partial x_i}z=0$ since $\deg(\frac{\partial }{\partial x_i}z)<\deg(z)=\ell_0$. Consequently $\frac{\partial f}{\partial x_i}z=0$ for each variable $x_i$. 

For the rest of the proof, we consider two cases: when $\Char{\kk}=0$ and when $\Char{\kk}=p>0$.

Assume that $\Char{\kk}=0$. Since $\deg(f)>0$, there must be at least one variable $x_i$ such that $\frac{\partial f}{\partial x_i}\neq 0$ (this is where we use the assumption that $\Char{\kk}=0$). But this contradicts to the assumption that $f$ has the least degree among all polynomials that annihilate $z$ and hence finishes the proof in the case when $\Char{\kk}=0$.

Assume $\Char{\kk}=p>0$. Since $\frac{\partial f}{\partial x_i}z=0$ for each variable $x_i$ and $\deg(\frac{\partial f}{\partial x_i})<\deg(f)$, we must have $\frac{\partial f}{\partial x_i}=0$ for each $x_i$. Consequently there is a polynomial $g$ such that 
\[f(x_1,\dots,x_d)=g(x^p_1,\dots,x^p_d).\]
It follows from (\ref{general Leibniz}) that 
\[(\partial^{[p]}_if)z=[\partial^{[p]},f]z=\partial^{[p]}_i(fz)-f(\partial^{[p]}_iz)=0\]
where $\partial^{[p]}_iz=0$ since $\deg(\partial^{[p]}_iz)<\deg(z)=\ell_0$. By our assumption on $f$, we must have $\partial^{[p]}_if=0$ for each $x_i$. Consequently, it follows from Lemma \ref{lem: derivative inseparable} that $\frac{\partial g}{\partial x_i}=0$, Hence there is a polynomial $g_1$ such that $g(x_1,\dots,x_d)=g_1(x^{p}_1,\dots, x^{p}_d)$. That is, 
\[f(x_1,\dots,x_d)=g(x^{p^2}_1,\dots,x^{p^2}_d).\]
Repeating this process, we can conclude that 
\[f(x_1,\dots,x_d)=g(x^{p^e}_1,\dots,x^{p^e}_d),\]
for each integer $e\geq 0$. Hence $\deg(f)\geq p^e$ for each $e\geq 0$, which is impossible. 

Therefore, in both cases, we derive a contradiction from the assumption that there is an integer $\ell$ such that $M_{\ell}=0$. This finishes the proof of (\ref{never zero}).
\end{enumerate}
\end{proof}

We would like to apply Theorem \ref{thm: nonvanishing negative degree} to Eulerian graded $\scr{D}$-modules. Recall that an Eulerian graded $\scr{D}$-module is a graded $\scr{D}$-module $M$ such that
\[(\sum_{t_1+\cdots +t_d=t;t_1,\dots,t_d\geq 0}x^{t_1}_1\cdots x^{t_d}_d\partial^{[t_1]}_1\cdots \partial^{[t_d]}_d)z=\binom{\deg(z)}{t}z\]
for every homogeneous element $z\in M$ and every positive integer $t$.

\begin{theorem}
\label{nonzero in negative degree}
Let $R=\kk[x_1,\dots,x_d]$ be a polynomial ring over a field $\kk$  and let $M$ be a nonzero Eulerian graded $\scr{D}$-module. 
\begin{enumerate}
\item 
\label{supp dim 0}
If $\dim(\Supp_R(M))=0$, then 
\begin{enumerate}
\item $M_\ell=0$ for each integer $\ell>-d$, and
\item $M_{\ell}\neq 0$ for each integer $\ell\leq -d$.
\end{enumerate}
\item 
\label{supp dim p>0}
If $\dim(\Supp_R(M))>0$ and each element in $M$ is annihilated by a nonzero polynomial in $R$, then 
\[M_{\ell}\neq 0\quad \forall \ell\in \ZZ.\]
\end{enumerate}
In particular, if each element in $M$ is annihilated by a nonzero polynomial in $R$, then $M_{\ell}\neq 0$ for every integer $\ell\leq -d$.
\end{theorem}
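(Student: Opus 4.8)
The plan is to deduce everything from Theorem~\ref{thm: nonvanishing negative degree} together with two structural inputs: the description of graded $\scr{D}$-modules supported only at $\frakm$, and the fact that $\Gamma_\frakm$ sends a graded $\scr{D}$-module to a graded $\scr{D}$-submodule.

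For part~(\ref{supp dim 0}), I would first observe that $\dim(\Supp_R(M))=0$ with $M$ graded and nonzero forces each element of $M$ to be annihilated by a power of $\frakm$, i.e.\ $M=\Gamma_\frakm(M)$ is $\frakm$-power torsion. Such a graded $\scr{D}$-module is a direct sum of degree shifts of $E:=H^d_\frakm(R)$. The Eulerian hypothesis then forbids nonzero shifts: since $E$ itself is Eulerian, the operator $\sum_{t_1+\cdots+t_d=t}x^{t_1}_1\cdots x^{t_d}_d\partial^{[t_1]}_1\cdots\partial^{[t_d]}_d$ acts on the degree-$\ell$ piece of $E(a)$ as multiplication by $\binom{\ell+a}{t}$, so $E(a)$ can be Eulerian only if $\binom{\ell}{t}=\binom{\ell+a}{t}$ in $\kk$ for all $t\geq 1$ and all $\ell\ll 0$; comparing the power series $(1+X)^{\ell+a}$ and $(1+X)^{\ell}$ over $\kk$ (using Lucas' theorem in positive characteristic, as in the proof of Lemma~\ref{lem: derivative inseparable}) this forces $a=0$. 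Hence $M$ is a direct sum of copies of $E$, and since an elementary computation gives $E_\ell\neq 0$ if and only if $\ell\leq -d$, statements (a) and (b) follow. (Alternatively, the implication that an Eulerian graded $\scr{D}$-module with support $\{\frakm\}$ is a direct sum of copies of $E$ can be quoted directly from the structure theory of Eulerian graded $\scr{D}$-modules.)

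For part~(\ref{supp dim p>0}), put $N:=\Gamma_\frakm(M)$. This is a graded $\scr{D}$-submodule of $M$ because $\Gamma_\frakm=H^0_\frakm$ preserves the $\scr{D}$-module structure (it is computed by a \v{C}ech complex of $\scr{D}$-modules). Then $M/N$ is again an Eulerian graded $\scr{D}$-module (the defining identity descends to quotients), every element of $M/N$ is still annihilated by a nonzero polynomial, and $M/N$ is $\frakm$-torsion-free, so $\frakm$ is not an associated prime of $M/N$; moreover $M/N\neq 0$, since $N$ is supported only at $\frakm$ while $\dim(\Supp_R(M))>0$. Applying Theorem~\ref{thm: nonvanishing negative degree}(\ref{never zero}) to $M/N$ gives $(M/N)_\ell\neq 0$ for every $\ell\in\ZZ$, hence $M_\ell\neq 0$ for every $\ell$. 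The final assertion of the theorem is then obtained by splitting into the cases $\dim(\Supp_R(M))=0$ and $\dim(\Supp_R(M))>0$ and invoking part~(\ref{supp dim 0})(b) and part~(\ref{supp dim p>0}) respectively.

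The step I expect to be the main obstacle is part~(\ref{supp dim 0})(a), an \emph{upper} bound on the degrees in which $M$ is nonzero: Theorem~\ref{thm: nonvanishing negative degree} only produces lower-type vanishing, and the assertion really does use the Eulerian hypothesis, since for $a>0$ the shift $E(a)=H^d_\frakm(R)(a)$ is an $\frakm$-torsion graded $\scr{D}$-module with zero-dimensional support that is nonzero in degree $-d+a>-d$. Thus part~(\ref{supp dim 0})(a) rests on the classification of $\frakm$-torsion graded $\scr{D}$-modules and on the characteristic-$p$ bookkeeping (Lucas' theorem) needed to eliminate degree shifts; once that is in place, part~(\ref{supp dim 0})(b) and all of part~(\ref{supp dim p>0}) are essentially formal.
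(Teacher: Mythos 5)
Your proposal is correct and follows essentially the same route as the paper: for part~(\ref{supp dim 0}) the paper cites Lyubeznik's classification of $\frakm$-torsion graded $\scr D$-modules together with Ma--Zhang's result that the Eulerian hypothesis makes the isomorphism to copies of $H^d_\frakm(R)$ degree-preserving (precisely the degree-shift elimination you sketch, and which you also note can be quoted directly), and for part~(\ref{supp dim p>0}) the paper does exactly what you do --- pass to $\overline{M}=M/\Gamma_\frakm(M)$, note it is Eulerian, nonzero, and has no $\frakm$-associated prime, then apply Theorem~\ref{thm: nonvanishing negative degree}(\ref{never zero}).
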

\begin{proof}
If $\dim(\Supp_R(M))=0$, then it follows from \cite[Theorem~2.4]{LyubeznikDMod} and \cite[Theorem~5.6]{MaZhang} that $M$ is isomorphic to a direct sum of some copies of $H^d_{\frakm}(R)$ {\it and} this isomorphism preserves degree. Hence (\ref{supp dim 0}) follows.

When $\dim(\Supp_R(M))>0$, we consider $\overline{M}:=M/\Gamma_{\frakm}(M)$. It follows from \cite[Proposition 2.8]{MaZhang} that $\overline{M}$ is Eulerian graded. Since $\dim(\Supp_R(M))>0$, it follows that $\overline{M}\neq 0$. It is clear that $\frakm$ is not an associated prime of $\overline{M}$ and each element of $\overline{M}$ is also annihilated by a nonzero polynomial in $R$. Therefore, it  follows from \ref{thm: nonvanishing negative degree}(\ref{never zero}) that $\overline{M}_{\ell}\neq 0$ for all integers $\ell\in \ZZ$. Consequently $M_{\ell}\neq 0$ for all integers $\ell\in \ZZ$.
\end{proof}

Next, we would like to apply our Theorem \ref{Kunneth formula: two components} to the study of cohomological dimension whose definition we recall as follows. 

\begin{defn} 
\label{cd defn}
Let $A$ be a noetherian commutative ring and $\fraka$ be an ideal of $A$. The \emph{cohomological dimension} of the pair $(A,\fraka)$ is defined as
\[\cd_{\fraka}(A):=\sup\{i\in \ZZ \mid H^i_\fraka(A)\neq 0.\}\]
\end{defn}

\begin{theorem}
\label{upper bound on cd}
Let $R,S$ be standard graded rings and $I,J$ be homogeneous ideals in $R,S$, respectively.
\begin{enumerate}
\item If $\cd_I(R)\cd_J(S)=0$ (that is, $\cd_I(R)=0$ or $\cd_J(S)=0$), then 
\[\cd_{I\# J}(R\# S)\leq \cd_I(R)+\cd_J(S).\]
\item If $\cd_I(R)\cd_J(S)\neq 0$ (that is, $\cd_I(R)\geq 1$ and $\cd_J(S)\geq 1$), then 
\[\cd_{I\# J}(R\# S)\leq \cd_I(R)+\cd_J(S)-1.\]
\end{enumerate}
\end{theorem}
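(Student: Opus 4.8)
The plan is to feed the K\"{u}nneth formula of Theorem~\ref{Kunneth formula: two components} (with $M=R$ and $N=S$, so that $\fraka=I\#J$) into the definition of cohomological dimension and track which summands can survive in high cohomological degree. Write $c:=\cd_I(R)$ and $c':=\cd_J(S)$; these are finite since $R$ and $S$ are Noetherian. By the symmetry between the two factors (the isomorphism $R\#S\cong S\#R$ carrying $I\#J$ to $J\#I$), in part (1) we may assume $c'=0$.

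First I would handle part (2), where $c\geq 1$ and $c'\geq 1$. Fix $k\geq c+c'$; then $k\geq 2$, so \eqref{thm: iso} applies. In the summand $R^{sat}_I\#H^k_J(S)$ we have $k>c'$, hence $H^k_J(S)=0$; in $H^k_I(R)\#S^{sat}_J$ we have $k>c$, hence $H^k_I(R)=0$; and in $\bigoplus_{i+j=k+1}H^i_I(R)\#H^j_J(S)$ a summand can be nonzero only if $i\leq c$ and $j\leq c'$, which would force $k+1=i+j\leq c+c'\leq k$, a contradiction. So $H^k_{\fraka}(R\#S)=0$ for every $k>c+c'-1$, giving $\cd_{I\#J}(R\#S)\leq c+c'-1$.

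For part (1), with $c'=0$, the identical computation shows $H^k_{\fraka}(R\#S)=0$ whenever $k\geq 2$ and $k>c$: now $H^k_J(S)=0$ for all $k\geq 1$, $H^k_I(R)=0$ for $k>c$, and a summand of $\bigoplus_{i+j=k+1}H^i_I(R)\#H^j_J(S)$ can be nonzero only when $i\leq c$ and $j=0$, forcing $k+1\leq c$. When $c\geq 1$ this already yields $\cd_{I\#J}(R\#S)\leq c=c+c'$. The remaining case is $c=c'=0$, where I would separately kill $H^1_{\fraka}(R\#S)$: since $H^1_I(R)=H^1_J(S)=0$, Remark~\ref{decompose injective res} shows the natural maps $R\to R^{sat}_I$ and $S\to S^{sat}_J$ are surjective, and as the functors $-\#S^{sat}_J$ and $R\#-$ are exact (Remark~\ref{Segre is exact}) so is the composite $R\#S\to R^{sat}_I\#S^{sat}_J$; then the exact sequence \eqref{thm: exact sequence} forces $H^1_{\fraka}(R\#S)=0$, and hence $\cd_{I\#J}(R\#S)\leq 0=c+c'$.

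I do not anticipate a genuine obstacle; the whole argument is bookkeeping on the three summands produced by Theorem~\ref{Kunneth formula: two components}. The two points that will need care are: why part (2) gains the extra $-1$ (in the mixed sum the indices keeping a factor nonzero satisfy $i\leq\cd_I(R)$ and $j\leq\cd_J(S)$, so they sum to at most $\cd_I(R)+\cd_J(S)$ and that summand vanishes one degree before the pure terms $H^k_I(R)\#S^{sat}_J$ and $R^{sat}_I\#H^k_J(S)$), and the fact that degrees $\leq 1$ are controlled by \eqref{thm: exact sequence} rather than \eqref{thm: iso}, so the boundary case $\cd_I(R)=\cd_J(S)=0$ requires the separate surjectivity argument.
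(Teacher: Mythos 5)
Your proof is correct and follows essentially the same route as the paper: apply Theorem~\ref{Kunneth formula: two components} with $M=R$, $N=S$, use the exact sequence \eqref{thm: exact sequence} to control $H^0_{\fraka}$ and $H^1_{\fraka}$ (via the surjectivity of $R\#S\to R^{sat}_I\#S^{sat}_J$ when $\cd_I(R)=\cd_J(S)=0$), and use the isomorphisms \eqref{thm: iso} to kill $H^k_{\fraka}$ for $k\geq 2$ by the same bookkeeping on the three summands. The only cosmetic difference is the order of the case analysis; the paper splits part (1) into $r=s=0$ and $r=0$, $s\geq 1$, while you split into $c\geq 1$ and $c=c'=0$, but the underlying observations are identical.
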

\begin{proof}
Set $r=\cd_I(R)$ and $s=\cd_J(S)$.
\begin{enumerate}
\item Assume $rs=0$. Without loss of generality, assume $r=0$. Then $r+s=s$. It suffices to show that $H^{k}_{I\#J}(R\#S)=0$ for all $k\geq s+1$. To this end, we will consider two cases: 
\begin{enumerate}
\item If $s=0$ as well, then $H^1_I(R)=H^1_J(S)=0$ and consequently $R\to R^{sat}_I$ and $S\to S^{sat}_J$ are surjective. It follows from Remark \ref{Segre is exact} that $R\#S\to R^{sat}_I\#S^{sat}_J$ is also surjective. Then it follows from the exact sequence (\ref{thm: exact sequence}) that $H^1_{I\#J}(R\#S)=0$. 

For each $k\geq 2$, we have 
\begin{enumerate}
\item $R^{sat}_I\#H^k_J(S)=0$ since $H^k_J(S)=0$;
\item $H^k_I(R)\# S^{sat}_J$ since $H^k_I(S)=0$; and
\item $\bigoplus_{i+j=k+1}H^i_I(R)\#H^j_J(S)=0$ since either $i\geq 1>r$ or $j\geq 1>s$.
\end{enumerate}
It now follows from the isomorphisms (\ref{thm: iso}) that $H^k_{I\#J}(R\#S)=0$. This proves the case when $r=s=0$.

\item If $s\geq 1$, then $s+1\geq 2$ and hence it follows from the isomorphisms (\ref{thm: iso}) that $H^k_{I\#J}(R\#S)=0$ for all $k\geq s+1\geq 2$. This proves the case when $r=0$ and $s\geq 1$.
\end{enumerate}
\item Assume that $r\geq 1$ and $s\geq 1$. It suffices to show that $H^{k}_{I\#J}(R\#S)=0$ for all $k\geq (r+s-1)+1=r+s\geq 2$. It is clear that $H^k_I(R)=H^k_J(S)=0$ for $k=r+s>r,s$. Since $k\geq 2$, it follows from the isomorphisms (\ref{thm: iso}) that it suffices to verify $H^i_I(R)\#H^j_J(S)=0$ for $i+j=k+1$. Since $i+j=k+1\geq r+s+1$, we have either $i>r$ or $j>s$; consequently either $H^i_I(R)=0$ or $H^j_J(S)=0$. This proves that $H^i_I(R)\#H^j_J(S)=0$ for $i+j=k+1$ and hence completes the proof of our theorem.
\end{enumerate}
\end{proof}

When $I=\frakm_R$ and $J=\frakm_S$, it is straightforward to check that $\cd_{I\#J}(R\#S)=\dim(R\#S)=\dim(R)+\dim(S)-1=\cd_I(R)+\cd_J(S)-1$. The following results show that the bound in Theorem \ref{upper bound on cd} is optimal for ideals of positive dimensions as well.

We state the result in the polynomial case first:
\begin{theorem}
\label{thm: optimal in polynomial case}
Let $R,S$ be polynomial rings over the same field $\kk$. Let $I,J$ be nonzero homogeneous ideals in $R,S$ respectively. Then
\[\cd_{I\# J}(R\# S)= \cd_I(R)+\cd_J(S)-1.\]
\end{theorem}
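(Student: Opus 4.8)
The plan is to combine the upper bound $\cd_{I\# J}(R\# S)\le \cd_I(R)+\cd_J(S)-1$ of Theorem \ref{upper bound on cd}(2) with a matching lower bound. Write $r:=\cd_I(R)$, $s:=\cd_J(S)$, $d_R:=\dim R$, $d_S:=\dim S$, and $k:=r+s-1$. Since $R$ and $S$ are domains and $I,J$ are nonzero (hence proper) homogeneous ideals, $H^0_I(R)=H^0_J(S)=0$ and $r,s\ge 1$, so Theorem \ref{upper bound on cd}(2) indeed applies; thus it suffices to prove $H^k_{I\# J}(R\# S)\neq 0$. I would do this by feeding the non-vanishing of Eulerian graded $\scr{D}$-modules from Theorem \ref{nonzero in negative degree} into the K\"unneth formula of Theorem \ref{Kunneth formula: two components}.

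The key preliminary is non-vanishing in all sufficiently negative degrees. For a nonzero proper homogeneous ideal $\fraka$ of $R$ with $t:=\cd_\fraka(R)$, the module $H^t_\fraka(R)$ is a nonzero Eulerian graded $\scr{D}$-module (it is a local cohomology module of the polynomial ring $R$; see the discussion preceding Theorem \ref{nonzero in negative degree}), and every one of its elements is annihilated by a nonzero polynomial, since $\Supp_R H^t_\fraka(R)\subseteq V(\fraka)\subsetneq\Spec R$ and $R$ is a domain. By the last assertion of Theorem \ref{nonzero in negative degree}, $H^t_\fraka(R)_\ell\neq 0$ for every $\ell\le -d_R$. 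Moreover, from the exact sequence $0\to R\to R^{sat}_\fraka\to H^1_\fraka(R)\to 0$ (here $H^0_\fraka(R)=0$) one gets $(R^{sat}_\fraka)_\ell\cong H^1_\fraka(R)_\ell$ for all $\ell<0$, so when $t=1$ we also obtain $(R^{sat}_\fraka)_\ell\neq 0$ for all $\ell\le -d_R$; the symmetric statements hold over $S$. Because $\kk$ is a field, a Segr\'e product $A\# B=\bigoplus_n A_n\otimes_\kk B_n$ is nonzero as soon as $A_\ell\neq 0$ and $B_\ell\neq 0$ for a single common $\ell$. Hence any Segr\'e product over $R\# S$ whose left factor is $H^r_I(R)$ (when $r\ge 2$) or $R^{sat}_I$ (when $r=1$) and whose right factor is $H^s_J(S)$ (when $s\ge 2$) or $S^{sat}_J$ (when $s=1$) is nonzero, being nonzero in every degree $\ell\le -\max(d_R,d_S)$.

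I then proceed by cases in $k=r+s-1$. If $r\ge 2$ and $s\ge 2$, then $k\ge 2$; in \eqref{thm: iso} the two saturation summands vanish ($k>r$ and $k>s$), and in $\bigoplus_{i+j=k+1}H^i_I(R)\# H^j_J(S)$ the only surviving term is $H^r_I(R)\# H^s_J(S)$ (any other pair $(i,j)$ with $i+j=r+s$ has $i>r$ or $j>s$), so $H^k_{I\# J}(R\# S)\cong H^r_I(R)\# H^s_J(S)\neq 0$. If exactly one of $r,s$ is $1$, say $r=1$ (so $k=s\ge 2$; the other case is symmetric), then the summand $R^{sat}_I\# H^s_J(S)$ of \eqref{thm: iso} is nonzero, hence $H^k_{I\# J}(R\# S)\neq 0$. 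Finally, if $r=s=1$, then $k=1$, and I would argue as in the step $m=1$ of the proof of Theorem \ref{depth Segre}: by \eqref{thm: exact sequence}, $H^1_{I\# J}(R\# S)$ is the cokernel of the natural map $R\# S\to R^{sat}_I\# S^{sat}_J$; factoring this as $R\# S\to R\# S^{sat}_J\to R^{sat}_I\# S^{sat}_J$ and applying the exact functor $-\# S^{sat}_J$ to $0\to R\to R^{sat}_I\to H^1_I(R)\to 0$ shows the second map has cokernel $H^1_I(R)\# S^{sat}_J$, which is nonzero; hence the composite is not surjective and $H^1_{I\# J}(R\# S)\neq 0$. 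In every case $\cd_{I\# J}(R\# S)\ge r+s-1$, which together with the upper bound gives the equality.

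The whole argument rests on the input Theorem \ref{nonzero in negative degree} — the fact that a nonzero Eulerian graded $\scr{D}$-module all of whose elements are killed by a nonzero polynomial cannot vanish in any degree $\le -d$ — and this is where the real difficulty lies; once it is available, the rest is bookkeeping with the K\"unneth formula. The only point that needs extra care is the degenerate value $k=1$, occurring exactly when $r=s=1$, where the clean isomorphism \eqref{thm: iso} is not available and the non-vanishing has to be read off from the four-term exact sequence \eqref{thm: exact sequence} together with the behaviour of $R^{sat}_I$ and $S^{sat}_J$ in negative degrees.
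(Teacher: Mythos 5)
Your proof is correct and follows the same route as the paper: nonvanishing of the top local cohomology modules in all degrees $\le -d$ via Theorem \ref{nonzero in negative degree} applied to Eulerian graded $\scr{D}$-modules, fed into the K\"unneth formula of Theorem \ref{Kunneth formula: two components} together with the upper bound from Theorem \ref{upper bound on cd}. Where you are more careful than the paper's terse proof is the degenerate case $r=s=1$ (so $k=1$), which the isomorphism \eqref{thm: iso} does not cover; you correctly fall back on the four-term exact sequence \eqref{thm: exact sequence} and the nonvanishing of $(R^{sat}_I)_\ell$ and $(S^{sat}_J)_\ell$ in sufficiently negative degrees, mirroring the $m=1$ step of Theorem \ref{depth Segre}, whereas the paper simply invokes the K\"unneth formula without commenting on this case.
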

\begin{proof}
Since $H^{\cd_I(R)}_I(R)$ and $H^{\cd_J(S)}_J(S)$ are nonzero Eulerian graded (\cite[Proposition~5.2]{MaZhang}) and clearly each of their elements is annihilated by a nonzero polynomial, it follows from Theorem \ref{nonzero in negative degree} that 
\[H^{\cd_I(R)}_I(R)\#H^{\cd_J(S)}_J(S)\neq 0.\] 
Then our Theorem \ref{Kunneth formula: two components} and Theorem \ref{upper bound on cd} finish the proof.
\end{proof}

We end with the following result which shows that the bound in Theorem \ref{upper bound on cd} can be optimal even when the rings are not polynomial rings.
\begin{theorem}
\label{equality for cd}
Let $R,S$ be standard graded rings and $I,J$ be positive dimensional homogeneous ideals in $R,S$ respectively. Assume that 
\begin{enumerate}
\item both $I$ and $J$ are cohomological complete intersections; that is, $H^i_I(R)=0$ for $i\neq \hgt_R(I)$ and $H^j_J(S)=0$ for $j\neq \hgt_S(J)$; and 
\item $\hgt_R(I)+\hgt_S(J)\geq 3$.
\end{enumerate} 
Then
\[\cd_{I\# J}(R\# S)= \cd_I(R)+\cd_J(S)-1.\]
\end{theorem}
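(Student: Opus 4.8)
The plan is to get the upper bound from Theorem \ref{upper bound on cd} and the matching lower bound from the K\"unneth formula (Theorem \ref{Kunneth formula: two components}) together with the Grothendieck-type nonvanishing estimate of Proposition \ref{elements in positive degrees}. Write $T:=R\#S$, $\fraka:=I\#J$, $r:=\hgt_R(I)$ and $s:=\hgt_S(J)$; by hypothesis $r+s\geq 3$ and $r,s\geq 1$. First I would identify the cohomological dimensions: the cohomological complete intersection hypothesis says $H^i_I(R)=0$ for $i\neq r$, so $\cd_I(R)\leq r$; and since $I$ is positive dimensional we have $\dim R\geq\dim(R/I)\geq 1$ and $\hgt_R(I)<\dim R$ (because $\hgt_R(I)+\dim(R/I)\leq\dim R$), so Proposition \ref{elements in positive degrees} applies to $H^r_I(R)$ and produces an integer $\ell_0$ with $H^r_I(R)_\ell\neq 0$ for all $\ell\geq\ell_0$. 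In particular $H^r_I(R)\neq 0$, so $\cd_I(R)=r$; symmetrically $\cd_J(S)=s$ and there is an integer $\ell_1$ with $H^s_J(S)_\ell\neq 0$ for all $\ell\geq\ell_1$. Since $r,s\geq 1$, part (2) of Theorem \ref{upper bound on cd} immediately gives $\cd_\fraka(T)\leq r+s-1=\cd_I(R)+\cd_J(S)-1$, so only the reverse inequality remains.

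For the reverse inequality I would feed $M=R$, $N=S$ and $k=r+s-1$ (which is $\geq 2$ because $r+s\geq 3$) into the isomorphism (\ref{thm: iso}) of Theorem \ref{Kunneth formula: two components}. In its last summand $\bigoplus_{i+j=k+1}H^i_I(R)\#H^j_J(S)=\bigoplus_{i+j=r+s}H^i_I(R)\#H^j_J(S)$, the cohomological complete intersection hypothesis kills every term except the one with $i=r$ and $j=s$; hence $H^r_I(R)\#H^s_J(S)$ occurs as a direct summand of $H^{r+s-1}_\fraka(T)$. To see this summand is nonzero, note that for every integer $\ell\geq\max\{\ell_0,\ell_1\}$ both $H^r_I(R)_\ell$ and $H^s_J(S)_\ell$ are nonzero, so the degree-$\ell$ component $\big(H^r_I(R)\#H^s_J(S)\big)_\ell=H^r_I(R)_\ell\otimes_\kk H^s_J(S)_\ell$ is a tensor product over the field $\kk$ of nonzero vector spaces, hence nonzero. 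Therefore $H^{r+s-1}_\fraka(T)\neq 0$, i.e.\ $\cd_\fraka(T)\geq r+s-1$, and combined with the previous paragraph this gives $\cd_{I\#J}(R\#S)=\cd_I(R)+\cd_J(S)-1$.

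I do not expect a genuine obstacle here: the real content is already packaged in Proposition \ref{elements in positive degrees} (which manufactures nonzero elements of $H^r_I(R)$ in every sufficiently large degree) and in Theorem \ref{Kunneth formula: two components} (which exhibits $H^r_I(R)\#H^s_J(S)$ as a direct summand). The two points that need a little care are the verification that Proposition \ref{elements in positive degrees} is applicable --- this is exactly where the hypothesis that $I$ and $J$ are positive dimensional enters, to guarantee $\dim R>0$ and $\hgt_R(I)<\dim R$ --- and the bookkeeping that $r+s\geq 3$ simultaneously puts us in case (2) of Theorem \ref{upper bound on cd} and in the range $k\geq 2$ where the isomorphisms (\ref{thm: iso}) are valid; this is also what makes clear why no analogous improvement can be expected when $\min\{\cd_I(R),\cd_J(S)\}\leq 1$.
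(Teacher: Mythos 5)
Your proof is correct and follows essentially the same route as the paper's: bound $\cd_{I\#J}(R\#S)$ from above by Theorem \ref{upper bound on cd}(2), then use the isomorphism (\ref{thm: iso}) with $k=r+s-1\geq 2$ to exhibit $H^r_I(R)\#H^s_J(S)$ as a direct summand of $H^{r+s-1}_{I\#J}(R\#S)$ and invoke Proposition \ref{elements in positive degrees} to see that it is nonzero. You are slightly more explicit than the paper in checking that the hypotheses of Proposition \ref{elements in positive degrees} (namely $\hgt_R(I)<\dim R$) follow from positive dimensionality of $I$ and in spelling out why a Segr\'e product of modules with overlapping nonzero degree ranges is nonzero, but these are just details the paper leaves implicit.
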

\begin{proof}
Set $k=\cd_I(R)+\cd_J(S)-1$.  Given Theorem \ref{upper bound on cd}, it suffices to show that $H^k_{I\#J}(R\#S)\neq 0$. 

It follows from our assumption that $\cd_I(R)=\hgt_R(I)$ and that $\cd_J(S)=\hgt_S(J)$. Since $\cd_I(R)+\cd_J(S)= \hgt_R(I)+\hgt_S(J)\geq 3$ by our assumption, $k\geq 2$. Therefore, by (\ref{thm: iso}), $H^k_{I\#J}(R\#S)$ contains a direct summand $H^{\hgt_R(I)}_I(R)\#H^{\hgt_S(J)}_J(S)$. It follows from Proposition \ref{elements in positive degrees}, $H^{\hgt_R(I)}_I(R)\#H^{\hgt_S(J)}_J(S)\neq 0$ which completes the proof.
\end{proof}

\end{document}